\tikzstyle{mybox} = [draw=black, fill=white,  thick,
\tikzstyle{mybox} = [draw=black, fill=white,  thick,
\newtheorem{thm}{Theorem}
\newtheorem{lemma}{Lemma}
\newtheorem{cor}{Corollary}
\newtheorem{prop}{Proposition}
\theoremstyle{definition}
\newtheorem{definition}{Definition}
\newtheorem{remark}{Remark}
\begin{document}

\title{A Triangle Algorithm for Semidefinite Version of Convex Hull Membership Problem}
\author{Bahman Kalantari\\
Department of Computer Science, Rutgers University, NJ\\
kalantari@cs.rutgers.edu
}
\date{}
\maketitle

\begin{abstract}
Given a subset $\mathbf{S}=\{A_1, \dots, A_m\}$ of $\mathbb{S}^n$, the set of  $n \times n$ real symmetric matrices, we define its {\it spectrahull} as the set $SH(\mathbf{S}) = \{p(X) \equiv (Tr(A_1 X), \dots, Tr(A_m X))^T : X \in \mathbf{\Delta}_n\}$, where ${\bf \Delta}_n$ is the {\it spectraplex},
$\{ X \in \mathbb{S}^n : Tr(X)=1, X \succeq 0 \}$.  We let {\it spectrahull membership} (SHM) to be the problem of testing if a given $b \in \mathbb{R}^m$ lies in
$SH(\mathbf{S})$.  On the one hand when $A_i$'s are diagonal matrices, SHM reduces to the {\it convex hull membership} (CHM), a fundamental problem in LP. On the other hand, a bounded  SDP feasibility is reducible to SHM.  By building on the {\it Triangle Algorithm} (TA) \cite{kalchar,kalsep}, developed for CHM and its generalization,
we prove $b \in SH(\mathbf{S})$ if and only if for each $X  \in \mathbf{\Delta}_n$ there exists $V \in \mathbf{\Delta}_n$, called {\it pivot}, such that $\Vert p(X)- p(V)\Vert \geq \Vert b-p(V)\Vert$. Using this we design a TA for SHM, where given $\varepsilon$,  in $O(1/\varepsilon^2)$ iterations it either computes a hyperplane separating $b$ from $SH(\mathbf{S})$, or $X_\varepsilon \in \mathbf{\Delta}_n$ such that $\Vert p(X_\varepsilon) - b \Vert \leq \varepsilon R$,  $R$  maximum error over $\mathbf{\Delta}_n$. Under certain conditions iteration complexity improves to $O(1/\varepsilon)$ or even $O(\ln 1/\varepsilon)$.  The worst-case complexity of each iteration is $O(mn^2)$, plus testing the existence of a pivot, shown to be equivalent to estimating the least eigenvalue of a symmetric matrix. This together with a semidefinite  version of Carath\'eodory theorem allow implementing TA as if solving a CHM, resorting to the {\it power method} only as needed, thereby improving the complexity of iterations.  The proposed Triangle Algorithm for SHM is simple, practical and applicable to general SDP feasibility and optimization. Also, it extends to a spectral analogue of SVM for separation of two spectrahulls.
\end{abstract}

{\bf Keywords:} Convex Hull, Convex Hull Membership, Linear Programming,  Duality, Semidefinite Programming, Approximation Algorithms, Triangle Algorithm




\section{Introduction} \label{sec1}
The {\it convex hull membership} problem (CHM) is the purest and simplest form of a linear programming feasibility problem.  Formally, given a subset $S= \{v_1, \dots, v_n\} \subset \mathbb{R} ^m$ and a distinguished point $p_\circ \in \mathbb{R} ^m$,  CHM is the problem of testing if $p_\circ  \in conv(S)$, the convex hull of $S$.  If $p_\circ \in conv(S)$, a representation as a convex combination of the points in $S$ is to be given, otherwise a certificate that $p_\circ \not \in conv(S)$.  In practice, we either compute an approximate feasible point, or provide a hyperplane that separates $p_\circ$ from $conv(S)$. By virtue of LP duality,
CHM is essentially equivalent to the general LP, see e.g. \cite{jinkal}. Aside from LP, CHM is a fundamental problem in computational geometry, machine learning, statistics and more. The task of solving CHM, either directly or indirectly, has been the focus of the pioneering polynomial time algorithms for LP by Khachiyan  \cite{kha79} and by Karmarkar \cite{kar84}.  The homogeneous case of CHM  has given rise to the  {\it matrix scaling dualities}, \cite{kalcan}, based on which \cite{KK} states a simple algorithm for either solving a homogeneous CHM, or the {\it quasi doubly-stochastic diagonal scaling} for an associated  positive semidefinite symmetric matrix.  An important application of CHM  in computational geometry and  machine learning is using it as an oracle in solving the {\it irredundancy problem}, the problem of computing all vertices of  $conv(S)$, see e.g. \cite{AKZ}.

CHM can of course be solved via any LP-based algorithm.  However, there are merits in designing specialized algorithms.  Polynomial time algorithms for CHM depend polynomially in $m,n$ and $\ln 1/\varepsilon$, where $\varepsilon$ is the desired accuracy of approximate solution produced. However,  for rational inputs these algorithms are capable of deciding, in polynomial time, the feasibility of CHM.
When the number of points, $n$, and their dimension, $m$, are large, polynomial time algorithms become prohibitive because of high degree of dependence on $n$ and $m$.  One can argue, be it from the practical, theoretical or intellectual points of view, that it is equally important to explore algorithms that run with low degree polynomial dependence on $n$ and $m$, but  polynomial dependence on $1/\varepsilon$.  These considerations have led to the study of {\it fully polynomial time approximation schemes} (FPTAS) for CHM.

Beyond LP, the next class of optimization problems that have found numerous applications are {\it Semidefinite Programming} (SDP).  The relevance of FPTAS in SDP becomes even more pronounced because while LP feasibility is polynomially decidable over rational inputs, SDP feasibility may be undecidable or unsolvable in polynomial time. Even the complexity of a special case of SDP known as the {\it square-root sum} problem is still open.  In many applications, both of LP and SDP, one only seeks to find an $\varepsilon$-approximation solution with sufficiently small $\varepsilon$, hence the  time complexity as a function of $\varepsilon$ is not a major issue.
What then becomes relevant in algorithms for such applications is the interplay between  the dimensions, $n$ and $m$, and the desired tolerance. In particular, in this sense  an FPTAS complexity as a function of $m,n, 1/\varepsilon$ could outperform a polynomial time complexity in terms of $m,n, \ln 1/\varepsilon$  when $\varepsilon$ is in a reasonable range.

The algorithm of  Frank-Wolfe \cite{Frank} is a well-known method for the minimization of a convex function over a compact convex set.
Even when Newton's method is applicable to a convex program, the Frank-Wolfe method compromises on the complexity of each iteration by minimizing a linear approximation to the objective function. It avoids the task of solving a system of linear equations arising in a quadratic approximation to the underlying objective function. Its iteration complexity is basically $O(1/\varepsilon^2)$.  In particular, given a CHM, Frank-Wolfe computes an approximate solution to the nearest point of  $p_\circ$ in $conv(S)$. When $p_\circ$ is not in $conv(S)$, the problem is known as {\it polytope distance}. A more general version of the problem is computing the distance between two convex hulls which arises in the {\it Support Vector Machine} (SVM) applications.  For relevant results on Frank-Wolfe, see   Gilbert \cite{Gilbert}, Clarkson \cite{clark2008} and  G{\"a}rtner and Jaggi \cite{Gartner}, and Jaggi \cite{Jaggi} for general convex optimization over a compact set via Frank-Wolfe.

The {\it Triangle Algorithm} (TA), introduced in \cite{kalchar}, is a geometrically inspired algorithm designed to solve CHM.  TA has similarities to Frank-Wolfe but it offers much more flexibility. It is endowed with dualities and offers alternative complexity bounds. Indeed in several experimentations TA has outperformed Frank-Wolfe.  When $p_\circ \in conv(S)$, in each iteration TA produces a {\it pivot}, guaranteed to exist by a {\it distance duality}, a point in $S$ that allows reducing the gap between the current iterate and $p_\circ$.
Then in $O(1/\varepsilon^2)$ iteration it computes  $p_\varepsilon \in conv(S)$ so that $\Vert p_\circ - p_\varepsilon \Vert \leq \varepsilon R$, where $R= \max \{\Vert p_\circ - v_i \Vert: v_i \in S \}$.  An alternative complexity for TA can be stated if $p_\circ$ is contained in a ball of radius $\rho$, contained in the relative interior of $conv(S)$. Specifically,  the number of iterations to compute an $\varepsilon$-approximate solution $p_\varepsilon$ is $O((R^2/\rho^2) \ln (1/\varepsilon))$.  In \cite{kalsphere} it is shown that in  the {\it Spherical}-CHM, the special case of CHM where $p_\circ =0$ and $\forall v \in S$, $\Vert v \Vert=1$,  assuming a certain checkable condition is satisfied in each iteration, the overall number of iterations is  $O(1/\varepsilon)$. Indeed in numerous  experimental results TA seems to run quite well. When $p_\circ$ is not in $conv(S)$, TA eventually computes a {\it witness}, a point $p'$ in $conv(S)$, where the  orthogonal bisecting hyperplane to the line segment $p_\circ p'$ separates $p_\circ$ from $conv(S)$. In fact $p'$ gives an estimate of the distance from $p_\circ$ to $conv(S)$ to within a factor of two. The complexity of computing a witness is dependent on the distance from $p_\circ$ to $conv(S)$, say $\delta_*$ and thus it follows that the number of iterations is $O(R^2/\delta_*^2)$.  When $\delta_*$ is bounded away from zero, the number of iterations to find a witness is very few.  This is an important feature of TA and has proved to be very useful algorithmically and in several applications. The algorithm {\it AVTA} (All-Vertex Triangle Algorithm) proposed in \cite{AKZ}, repeatedly makes use of TA in order to efficiently  compute the set of all vertices of $conv(S)$, or an approximate subset of the vertices whose convex hull approximates $conv(S)$ to any prescribed accuracy.  Aside from theoretical significance, practical utility of TA is supported by large-scale computations in realistic applications. For applications in machine learning, computational geometry and LP see \cite{AKZ}, where a substantial amount of computing is also provided.

A generalization of the Triangle Algorithm described in \cite{kalsep} tests if a  given pair of arbitrary compact convex sets $C, C' \subset \mathbb{R}^m$ intersect or are separable. Specifically, it either computes an approximate point of intersection, or a separating hyperplane, or if desired an approximation to the optimal pair of supporting hyperplanes, or approximation to the distance between the sets.  In particular, the {\it hard margin} SVM is a very special case where the convex sets are convex hull of finite sets, see \cite{Burges, Gupta}.  In this more general version of TA the complexity of each iteration depends on the nature and description of the underlying sets.  In the worst-case it requires  solving an LP over one or both convex set. However, often times this LP only needs to be solved suboptimally.  In this article we shall refer
to {\it General-CHM} as the problem of testing if a given point $p_\circ \in \mathbb{R}^m$ lies in a given arbitrary compact convex subset $C$ of $\mathbb{R}^m$.

In this article we first reduce a specialized  SDP feasibility problem,  called {\it spectrahull membership} (SHM) to a General-CHM and then develop a version of the Triangle Algorithm to solve it. SHM is an analogue of  CHM for SDP. Just as CHM is closely related to LP feasibility,  SHM is closely related to SDP feasibility and hence SDP optimization.  A general SDP can be viewed as an LP,  where the underlying nonnegativity cone is replaced with the cone of symmetric positive semidefinite matrices, see e.g.  Alizadeh \cite{Alizadeh}, Nesterov and Nemirovskii \cite{NN}, Vandenberghe and Boyd \cite{van96}.  Both LP and SDP are special cases of {\it self-concordant} optimization problems, developed by Nesterov and Nemirovskii \cite{NN} and can be approximated to within $\varepsilon$ tolerance in polynomial time complexity in terms of the dimensions and  $\ln 1/\varepsilon$. The main work in each iteration is solving a Newton system that arises in the course of quadratic approximation to a potential function or a parameterized potential function via a path-following approach. Many interior-point algorithms for LP have been extended to SDP without the use of self-concordance theory. As in LP relaxations, SDP relaxations find applications in combinatorial optimization. In particular, Goemans and Williamson \cite{GW} showed that an SDP relaxation of the {\it MAX CUT} problem produces a good approximation.  Other applications of SDP for combinatorial problems are described by  Lov\'asz \cite{Lovasz}. SDP relaxations are also considered  in non-convex {\it Quadratically Constraint Quadratic Programs}, see e.g  Nesterov et al. \cite{NW} and Luo et al. \cite{Luo}.  As in the MAX CUT problem, an optimal solution of the SDP relaxation has to be rounded into a feasible solution. This rounding however is not necessarily as in the MAX CUT problem because it is dependent on the constraints of the underlying problem.  The significance of solving SDP relaxation thus lies in whether or not it is possible to round its optimal solution into  a feasible solution, as well as the quality of approximate solution it produces.  A survey of SDP relaxations for {\it Binary Quadratic Programming}  with guaranteed approximation is given in \cite{Luo}. The complexity of solving the SDP relaxation via interior-point algorithms, e.g. as stated in Helmberg et al. \cite{HRVW}, is $O(n^{4.5} \ln 1/\varepsilon )$ operations, and in some special cases $O(n^{3.5} \ln 1/\varepsilon)$. However, in a general SDP the number of constraints can be as large as $O(n^2)$ so that the over all complexity becomes
 $O(n^{6.5} \ln 1/\varepsilon )$, see Nesterov \cite{Nesterov}.

One may ask, why not just solve SHM as an SDP optimization?  There are many ways to answer this question and to justify theoretical and practical significance of the study of SHM.  On the one hand the case of homogeneous CHM is realistic and intrinsic to optimization, arising for example as a dual to strict LP feasibility. The study of homogeneous CHM has for example given rise to the {\it diagonal matrix scaling dualities} as well as a simple path-following algorithm for solving CHM or producing a corresponding diagonal scaling, \cite{KK}.  In \cite{kalsdp} it is also shown that analogous dualities and a corresponding algorithm is possible for a homogeneous case of SDP.  On the other hand, CHM is a significant special case of LP feasibility the study of which has given rise to the distance dualities and the Triangle Algorithm which in many applications is capable of producing very good solutions efficiently. When the dimensions in  CHM are very large even performing one Newton iteration in  interior point methods may become prohibitive.  The Triangle Algorithm has performed very well in such instances. Analogously,  the study of SHM together with extension of the Triangle Algorithm will provide an alternative algorithm for SDP feasibility and optimization.  It allows replacing expensive Newton iterations arising in interior point algorithms with efficient iterations, but possibly at the cost of performing more iterations.  A single Newton step in solving an SDP with $m$ constraints with $n \times n$ matrices could take as much as $O(n^2(m+n)m)$ arithmetic operations, see Nesterov \cite{Nesterov}. Here $m$ can be as large as $n(n+1)$ so that the operations of a single iteration can be as large as $O(n^6)$.  The complexity of each iteration in the proposed TA for SHM is $O(mn^2)$ plus that of obtaining a pivot, computable by estimating the least eigenvalue of a symmetric $n \times n$ matrix arising in that iteration. This estimation can be achieved via the {\it power method} and does not need to be carried out to optimality.  However, by proving a semidefinite version of Carath\'eodory theorem and the existence of rank-one pivots, it is possible to implement TA as if it is solving a CHM, but occasionally calling the power method.  In summary, the proposed  Triangle Algorithm is a simple and practical algorithm for SHM with novel dualities, allowing interplay with the standard CHM.  Furthermore, based on existing computational and theoretical experiences with  TA for  CHM, we are led to believe for SHM too  the proposed version of TA  will result in  theoretical and practical alternatives to the existing SDP algorithms. It can also be extended to an SDP analogue of SVM for testing the separation of two SHMs.

The article is organized as follows:  In Section \ref{sec2}, we summarize the relevant properties of the Triangle Algorithm from \cite{kalchar, kalsep} for CHM and the more general case of General-CHM, giving its description,  dualities and complexities.  In Section \ref{sec3}, we define spectrahull, then the spectrahull membership problem (SHM) and give its relation to SDP feasibility.  In Section \ref{sec4}, we prove {\it distance dualities} for SHM and state a semidefinite version of Carath\'eodory theorem.
In Section \ref{sec5},  we give  characterization of a {\it pivot} (a point used to reduce the current gap) for SHM and state several corollaries of this characterization.  In Section \ref{sec6}, we describe a version of TA for SHM. Then in \ref{subsec6.1} we describe a strategy for computing a pivot via the power method. In \ref{subsec6.2} we describe how the Triangle Algorithm for  SHM could interact with an underlying CHM. In \ref{subsec6.3} we give a complexity analysis for solving the SDP relaxation of MAX CUT via the TA. In Section \ref{sec7}, we describe an analogue of SVM for SHM and state the complexity of solving it via the TA.  We end with concluding remarks on the potentials of the proposed algorithm,  its applications and possible extensions.

\section{Summary of the Triangle Algorithm and its Properties} \label{sec2}

The {\it Triangle Algorithm} (TA) introduced in \cite{kalchar} solves the {\it convex hull membership} problem:

\begin{definition}  \label{defn0} {\rm (CHM)}
Given a subset $S=\{v_1, \dots, v_n\} \subset \mathbb{R}^m$, a distinguished point $p_\circ \in \mathbb{R} ^m$, and $\varepsilon \in (0,1)$, solving CHM means either computing an $\varepsilon$-approximate solution, i.e. $p_\varepsilon \in conv(S)$ so that
\begin{equation}
\Vert p_\circ - p_\varepsilon \Vert  \leq \varepsilon R,  \quad R = \max \{\Vert v_i - p_\circ \Vert  : v_i \in S\},
\end{equation}
or a hyperplane that separates $p_\circ$ from $conv(S)$.
\end{definition}

A more general case of CHM and a corresponding Triangle Algorithm were developed in \cite{kalsep}. In particular, it solves the following problem.

\begin{definition}  \label{defn1} {\rm (General-CHM)} Given  an arbitrary compact convex subset $C$ of $\mathbb{R}^m$ (described either explicitly or implicitly),
a distinguished point $p_\circ \in \mathbb{R} ^m$, and $\varepsilon \in (0,1)$, either compute an $\varepsilon$-approximate solution, i.e. $p_\varepsilon \in C$ so that
\begin{equation}
\Vert p_\circ - p_\varepsilon \Vert  \leq \varepsilon R,  \quad R = \max \{\Vert x - p_\circ \Vert  : x \in C\},
\end{equation}
or a hyperplane that separates $p_\circ$ from $C$.
\end{definition}

\begin{remark} By the Krein-Milman Theorem,  $C$  is the convex hull of its extreme points. Thus General-CHM is indeed a general version of CHM albeit the set of extreme points may be infinite and only known implicitly.
\end{remark}

\begin{definition} \label{defn2}
Given  $p' \in C$, $v \in C$  is called a $p_\circ$-{\it pivot} (or  pivot at $p'$, or simply pivot) if
\begin{equation} \label{pivot}
\Vert p' - v \Vert \geq \Vert p_\circ -  v \Vert.
\end{equation}
Equivalently,
\begin{equation} \label{eq2}
(p' - p_\circ)^Tv \leq  \frac{1}{2} (\Vert p' \Vert^2 - \Vert p_\circ \Vert^2 ).
\end{equation}
\end{definition}

\begin{remark}  \label{rem1} From (\ref{eq2}) we have, given $p' \in C$,  $v \in C$ is a pivot at $p'$ if and only if
\begin{equation}  \label{eeq1}
\min\{(p' - p_\circ )^Tx : x \in C\} \leq (p' - p_\circ )^Tv \leq \frac{1}{2} (\Vert p' \Vert^2 -  \Vert p_\circ \Vert^2) .
\end{equation}
It follows that, in the worst-case, testing for the existence of a pivot at $p'$  requires minimizing a linear function over $C$.  However, in many cases this optimization is not necessary, or not needed to be carried out to optimality.
\end{remark}

\begin{definition} \label{witness} We say $p' \in C$ is a
$p_\circ$-{\it witness} (or simply a {\it witness})  if  the orthogonal bisecting hyperplane to the line segment $p_\circ p'$ separates $p_\circ$ from $C$.  Equivalently,
\begin{equation} \label{eq3}
\Vert p' - x \Vert  < \Vert p_\circ - x \Vert, \quad \forall x \in C.
\end{equation}
\end{definition}

Geometrically, the {\it Voronoi cell} of $p'$ (relative to $p_\circ$) contains $C$ in its entirety.  The separating hyperplane $H$ is given as
\begin{equation}
H=\{x: (p'-p_\circ)^Tx = \frac{1}{2}(\Vert p' \Vert^2 -\Vert p_\circ \Vert^2)\}.
\end{equation}

Given an iterate $p' \in C$ that is neither an $\varepsilon$-approximate solution nor a witness, TA finds a $p_\circ$-pivot $v \in C$. Then on the line segment $p'v$ it computes the nearest point to $p_\circ$. It then replaces $p'$ with the nearest point and repeats the process.  The nearest point can be computed easily:

\begin{prop} \label{prop1} Given an iterate $p' \in C$, suppose $v \in C$ is a pivot. If the nearest point to $p_\circ$ on the line segment $p'v$ is not $v$ itself, it is given as
\begin{equation} \label{eq5}
p'' = (1-\alpha)p' + \alpha v, \quad  \alpha = {(p_\circ -p')^T(v-p')}/{\Vert v - p' \Vert^2}.
\end{equation}
\qed
\end{prop}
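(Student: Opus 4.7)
The statement is a standard one-dimensional projection on the segment $p'v$, so my plan is essentially a calculus/geometry argument, with the pivot hypothesis used only to certify that the interior critical point lies in the right place.

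First, I would parametrize the segment as $x(t) = (1-t)p' + t v$ for $t \in [0,1]$ and consider the squared-distance function $f(t) = \Vert p_\circ - x(t) \Vert^2 = \Vert (p_\circ - p') - t(v-p') \Vert^2$. Expanding gives a quadratic in $t$ whose derivative is $f'(t) = -2(p_\circ - p')^T(v-p') + 2t \Vert v - p' \Vert^2$. Setting $f'(t)=0$ yields the unique unconstrained minimizer $t = \alpha$ with $\alpha$ exactly as in (\ref{eq5}); since $f$ is strictly convex (note $v \neq p'$ because $\Vert p' - v \Vert \geq \Vert p_\circ - v \Vert$ would otherwise force $p' = p_\circ$, contradicting that $p'$ is not an $\varepsilon$-approximate solution), this critical point is the global minimizer on $\mathbb{R}$.

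Next I would use the pivot hypothesis to show $\alpha \in [0,1]$ modulo the stated exception. The pivot condition $(p'-p_\circ)^Tv \leq \frac{1}{2}(\Vert p'\Vert^2 - \Vert p_\circ\Vert^2)$ from (\ref{eq2}), after adding $(p_\circ - p')^T p'$ to both sides and simplifying, is equivalent to $(p_\circ - p')^T(v - p') \geq \frac{1}{2}\Vert p_\circ - p'\Vert^2$; dividing by $\Vert v - p'\Vert^2 > 0$ gives $\alpha \geq \tfrac{1}{2}\Vert p_\circ - p' \Vert^2/\Vert v - p'\Vert^2 \geq 0$. Hence the minimizer on $[0,1]$ is either $\min\{\alpha,1\}$; by hypothesis the minimizer on the segment is not $v$, which is exactly the case $\alpha \leq 1$, so the nearest point equals $(1-\alpha)p' + \alpha v$ with the formula for $\alpha$ as claimed.

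The main obstacle, if there is one, is just bookkeeping: confirming $\alpha \in [0,1]$ under the pivot condition, and ruling out the degenerate case $v = p'$ so that the denominator $\Vert v - p'\Vert^2$ is nonzero. Both are short and do not require any additional machinery beyond Definition~\ref{defn2}.
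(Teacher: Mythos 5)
Your computation is correct and is exactly the standard orthogonal projection onto the line through $p'$ and $v$; the paper states Proposition~\ref{prop1} with no proof, treating it as evident, so there is no paper argument to compare against, and your derivation fills in precisely the calculus one would expect. One small inaccuracy: you rule out $v = p'$ by invoking ``$p'$ is not an $\varepsilon$-approximate solution,'' but that is not a hypothesis of Proposition~\ref{prop1}; the cleaner observation is that if $v = p'$ the segment degenerates to a single point, so the nearest point is $v$ itself and the proposition's conditional is vacuous, whence one may assume $v \neq p'$ (and hence $\Vert v - p' \Vert^2 > 0$) without any extra assumption. Your equivalence $(p_\circ - p')^T(v - p') \geq \tfrac{1}{2}\Vert p_\circ - p' \Vert^2$ for the pivot condition, giving $\alpha \geq 0$, is correct, and the hypothesis that the nearest point is not $v$ is exactly what gives $\alpha < 1$, so the minimizer lies in the interior and equals the unconstrained critical point.
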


The next theorem justifies the correctness of TA.

\begin{thm}  \label{thm1} {{\rm (Distance Duality)}}
$p_\circ \in C$ if and only if for each  $p' \in C$ there exists a pivot $v \in C$. Equivalently,  $p_\circ \not \in C$ if and only if there exists a witness $p' \in C$.  \qed
\end{thm}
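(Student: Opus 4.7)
The plan is to verify directly that the two statements of the theorem are contrapositives of one another, and then prove just one of them. The key observation is that the negation of "there exists a pivot $v \in C$ at $p'$" is precisely "$\Vert p' - v \Vert < \Vert p_\circ - v \Vert$ for every $v \in C$", which is the definition of $p'$ being a witness (Definition \ref{witness}). So "for each $p' \in C$ a pivot exists" is logically equivalent to "no $p' \in C$ is a witness", and it suffices to prove $p_\circ \in C \iff$ for each $p' \in C$ a pivot $v \in C$ exists.

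For the easy direction, assume $p_\circ \in C$. Given any $p' \in C$, take $v = p_\circ \in C$; then $\Vert p' - v \Vert = \Vert p' - p_\circ \Vert \geq 0 = \Vert p_\circ - v \Vert$, so $v$ is a pivot. (This covers the trivial case $p' = p_\circ$ as well, since any $v \in C$ is then a pivot.)

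For the substantive direction, assume $p_\circ \notin C$ and produce a witness. Since $C$ is compact, the continuous function $x \mapsto \Vert p_\circ - x \Vert$ attains its minimum on $C$ at some $p^* \in C$, and $\delta_* := \Vert p_\circ - p^* \Vert > 0$ because $p_\circ \notin C$. By convexity of $C$, for every $x \in C$ and $t \in [0,1]$ the point $p^* + t(x - p^*)$ lies in $C$, so $\Vert p_\circ - p^* - t(x - p^*) \Vert^2 \geq \Vert p_\circ - p^* \Vert^2$. Expanding and letting $t \downarrow 0$ gives the standard variational inequality $(p_\circ - p^*)^T(x - p^*) \leq 0$ for all $x \in C$. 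The plan is then to verify that $p^*$ is a witness via the identity
\begin{equation*}
\Vert p_\circ - x \Vert^2 - \Vert p^* - x \Vert^2 = \Vert p_\circ - p^* \Vert^2 - 2(p_\circ - p^*)^T(x - p^*),
\end{equation*}
which I would confirm by a one-line expansion. Combining the variational inequality (the second term is $\geq 0$) with $\Vert p_\circ - p^* \Vert^2 = \delta_*^2 > 0$ yields $\Vert p_\circ - x \Vert > \Vert p^* - x \Vert$ for all $x \in C$, i.e., the strict inequality (\ref{eq3}), so $p^*$ is a witness.

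The main obstacle is essentially the existence and separation step for $p^*$; this is the Hilbert projection argument applied to a compact convex set, and the only thing to keep track of is the strict inequality, which is secured by $p_\circ \notin C$. Everything else — the contrapositive reformulation, the choice $v = p_\circ$ in the easy direction, and the algebraic identity relating the two squared distances — is routine.
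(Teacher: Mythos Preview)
Your proof is correct. Note, however, that the paper does not actually prove Theorem~\ref{thm1}: it appears in Section~\ref{sec2}, which is an explicit summary of prior results from \cite{kalchar,kalsep}, and the statement is closed with a bare \qedsymbol\ and no argument. So there is no ``paper's own proof'' to compare against here.

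That said, your argument is exactly the standard one and is presumably what the cited references contain: the contrapositive reduction is immediate from the definitions of pivot and witness, the forward direction is the trivial choice $v=p_\circ$, and the converse is the Hilbert-projection/variational-inequality characterization of the nearest point in a compact convex set, followed by the polarization identity to upgrade the obtuse-angle condition into the strict inequality $\Vert p_\circ - x\Vert > \Vert p^* - x\Vert$. All steps are sound, and the strictness is correctly traced back to $\delta_* > 0$.
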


The first complexity statement for TA is given in the following theorem from \cite{kalsep}.

\begin{thm}   \label{thm2}  {\rm (Complexity Bounds)} Given $\varepsilon \in (0,1)$, in $O(1/\varepsilon^2)$ iterations TA either computes
$p_\varepsilon \in C$ with $\Vert p_\circ - p_\varepsilon \Vert \leq R \varepsilon$, or a witness. In particular, if  $p_\circ \not \in C$ and $\delta_*= \min\{\Vert x-p_\circ \Vert: x \in C\}$, the number of itaretions to compute a witness is
$O(R^2/\delta^2_*)$.  Furthermore, given  any witness $p' \in C$, we have
\begin{equation}
\delta_* \leq \Vert p' -  p_\circ \Vert \leq 2 \delta_*.
\end{equation}
\qed
\end{thm}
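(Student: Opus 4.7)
The plan is to reduce all three claims to a single per-iteration inequality
\[ d_{k+1} \leq d_k - \frac{d_k^2}{16 R^2}, \qquad d_k := \Vert p_k - p_\circ \Vert^2, \]
and then extract both complexity bounds from it by elementary means, handling the geometric sandwich $\delta_* \leq \Vert p' - p_\circ \Vert \leq 2 \delta_*$ separately.

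For the one-step reduction, note that the $\alpha$ in (\ref{eq5}) is exactly chosen so that $p''$ is the orthogonal projection of $p_\circ$ onto the line through $p'$ and $v$. A direct expansion of $p'' - p_\circ = (p' - p_\circ) + \alpha(v - p')$ (or Pythagoras on the triangle $p_\circ p' p''$) gives
\[ \Vert p'' - p_\circ \Vert^2 = \Vert p' - p_\circ \Vert^2 - \frac{\bigl((p_\circ - p')^T(v - p')\bigr)^2}{\Vert v - p' \Vert^2}. \]
The pivot condition $\Vert p' - v \Vert^2 \geq \Vert p_\circ - v \Vert^2$ expands and rearranges to $(p_\circ - p')^T(v - p') \geq \tfrac{1}{2}\Vert p' - p_\circ \Vert^2$, while $\Vert v - p' \Vert \leq 2R$ because $v, p' \in C$ and $C$ lies in the ball of radius $R$ about $p_\circ$. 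Substituting these estimates delivers the recursion above.

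For the iteration counts, I would apply the standard reciprocal-telescoping trick: dividing the recursion by $d_k d_{k+1}$ and using $d_{k+1} \leq d_k$ gives $1/d_{k+1} - 1/d_k \geq 1/(16 R^2)$, so $d_k \leq 16 R^2 / k$. Requiring $d_k \leq \varepsilon^2 R^2$ then needs only $k \geq 16/\varepsilon^2$, giving the $O(1/\varepsilon^2)$ bound. If at any iterate no pivot exists, Theorem~\ref{thm1} makes that iterate a witness and the algorithm halts early. For the infeasible case $p_\circ \notin C$, the algorithm cannot produce an $\varepsilon$-approximate solution, so it must terminate at some iteration $N$ with a witness; every earlier iterate had a pivot, so the recursion still gives $d_N \leq 16R^2/N$, while $p_N \in C$ forces $d_N \geq \delta_*^2$, and therefore $N \leq 16 R^2/\delta_*^2$, which is the advertised $O(R^2/\delta_*^2)$ bound.

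The sandwich for a witness follows by a clean geometric argument: the lower bound $\delta_* \leq \Vert p' - p_\circ \Vert$ is immediate because $p' \in C$. For the upper bound, let $p^*$ be the nearest point of $C$ to $p_\circ$. If $\Vert p' - p_\circ \Vert > 2\delta_*$, the reverse triangle inequality yields $\Vert p' - p^* \Vert \geq \Vert p' - p_\circ \Vert - \delta_* > \delta_* = \Vert p_\circ - p^* \Vert$, so $p^*$ satisfies (\ref{pivot}) and is a pivot at $p'$, contradicting the witness condition. The main obstacle in the whole argument is really just the per-iteration reduction: one must recognize that the pivot inequality (\ref{pivot}) is precisely the bound needed to control the Pythagorean correction term, after which the complexity statements and the sandwich fall out mechanically.
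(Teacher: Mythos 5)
Your proof is correct, and the argument is essentially the standard Triangle Algorithm convergence analysis from \cite{kalchar,kalsep}, which the paper cites for this theorem rather than reproducing: the Pythagorean one-step decrease, the observation that the pivot inequality lower-bounds the inner product $(p_\circ-p')^T(v-p')$ by $\tfrac12\Vert p'-p_\circ\Vert^2$, the diameter bound $\Vert v-p'\Vert\leq 2R$, and the reciprocal telescoping of $d_k^{-1}$. One small gap worth closing: your display for $\Vert p''-p_\circ\Vert^2$ is the unconstrained projection formula, which is valid only when $\alpha\in[0,1]$; when the projection overshoots ($\alpha>1$) Proposition~\ref{prop1} sets $p''=v$, and you should check the recursion still holds there. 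It does, since then $\Vert v-p'\Vert^2 < (p_\circ-p')^T(v-p')$ gives $\Vert v-p_\circ\Vert^2 = d_k - 2(p_\circ-p')^T(v-p') + \Vert v-p'\Vert^2 < d_k - (p_\circ-p')^T(v-p') \leq d_k/2 \leq d_k - d_k^2/(16R^2)$, the last step using $d_k\leq R^2$. A second, minor imprecision: in the infeasible case you assert the algorithm ``cannot produce an $\varepsilon$-approximate solution,'' which is only true when $\varepsilon R<\delta_*$; the cleaner phrasing is that as long as a pivot exists the recursion holds, so the number of iterations that admit a pivot is bounded by $16R^2/\delta_*^2$, after which a witness is output. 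The sandwich argument via the nearest point $p^*$ and the reverse triangle inequality is exactly right.
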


In a more complicated fashion, see \cite{kalsep}, it can be shown that TA can approximate the distance to $p_\circ$ to within any prescribed accuracy $\varepsilon$.

\begin{thm}  \label{thm6} Suppose $p_\circ \not \in C$. Let $\delta_*= \min\{\Vert x-p_\circ \Vert: x \in C\}$. Given $\varepsilon \in (0,1)$, in $O(R^2/\delta^2_* \varepsilon)$ iterations TA compute an approximation $p_\varepsilon \in C$ satisfying
\begin{equation}  \label{eeq2}
\Vert p_\varepsilon -p_\circ \Vert - \delta_* \leq  \Vert p_\varepsilon -p_\circ  \Vert \varepsilon.
\end{equation}
\qed
\end{thm}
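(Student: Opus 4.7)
The plan is to view the iterations (after the algorithm has certified $p_\circ \not\in C$ via a witness) as a Frank--Wolfe-style descent on the convex function $\phi(x)=\|x-p_\circ\|^2$ over $C$: the line-search update of Proposition~\ref{prop1}, driven by a linear-minimization direction $v_k\in C$ attaining $\min\{(p_k-p_\circ)^T x:x\in C\}$ in place of a strict pivot, should drive the primal gap $h_k:=\|p_k-p_\circ\|^2-\delta_*^2$ to zero at the $O(R^2/k)$ rate, and converting to the desired relative-error tolerance then yields the claimed $O(R^2/(\delta_*^2\varepsilon))$ bound.

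The first step is a standard Frank--Wolfe gap inequality. Writing $p^*\in C$ for the (unique) nearest point to $p_\circ$ and $g_k:=(p_k-p_\circ)^T(p_k-v_k)$, the convexity identity $\|a\|^2-\|b\|^2\leq 2a^T(a-b)$ applied with $a=p_k-p_\circ$, $b=p^*-p_\circ$, together with optimality of $v_k$, should give $h_k\leq 2g_k$. The second step bounds the per-iteration decrease: expanding $\|p_{k+1}-p_\circ\|^2$ with $p_{k+1}=p_k+\alpha(v_k-p_k)$ at the optimal $\alpha=(p_\circ-p_k)^T(v_k-p_k)/\|v_k-p_k\|^2$ from Proposition~\ref{prop1}, and using $\|v_k-p_k\|\leq 2R$ (since $C\subseteq B(p_\circ,R)$), yields
\begin{equation*}
h_{k+1}\;\leq\; h_k-\frac{g_k^2}{\|v_k-p_k\|^2}\;\leq\; h_k-\frac{g_k^2}{4R^2}\;\leq\; h_k-\frac{h_k^2}{16R^2};
\end{equation*}
the corner case in which the unconstrained $\alpha$ exceeds $1$ is absorbed by taking $p_{k+1}=v_k$, which only strengthens the bound. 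Taking reciprocals converts the recursion into $1/h_{k+1}\geq 1/h_k+1/(16R^2)$, and induction gives $h_k\leq 16R^2/k$.

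To extract the relative-error bound, I would rewrite $\|p_\varepsilon-p_\circ\|-\delta_*\leq\varepsilon\|p_\varepsilon-p_\circ\|$ as $\|p_\varepsilon-p_\circ\|\leq\delta_*/(1-\varepsilon)$, equivalently $h_k\leq \delta_*^2\bigl[(1-\varepsilon)^{-2}-1\bigr]$. Since $(1-\varepsilon)^{-2}-1\geq \varepsilon$ on $(0,1)$, it suffices to force $h_k\leq \delta_*^2\varepsilon$, which by the $O(R^2/k)$ bound above needs at most $k=O(R^2/(\delta_*^2\varepsilon))$ iterations.

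The main obstacle is conceptual rather than computational: strict $p_\circ$-pivots need not exist once $p_\circ\not\in C$, so one must justify that the linear-minimization direction $v_k$ legitimately replaces a pivot inside the TA machinery and that Proposition~\ref{prop1}'s line-search formula still applies (in particular, $\alpha\geq 0$ because $g_k\geq 0$ by optimality of $v_k$). The remainder is a standard potential-reduction recursion, but one must keep $g_k$ strictly positive at every step (else $p_k=p^*$ already and the iteration terminates early with $h_k=0$) and uniformly bound $\|v_k-p_k\|$ by $2R$ rather than by some iterate-dependent quantity.
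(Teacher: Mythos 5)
Your argument is mathematically correct, but it is worth noting that the paper itself gives no proof of Theorem~\ref{thm6} at all: it merely remarks that the claim ``can be shown in a more complicated fashion'' and defers to \cite{kalsep}. So there is no in-paper proof to compare against, and your self-contained derivation is in fact a useful addition. The route you take is a clean Frank--Wolfe rate analysis for $\phi(x)=\|x-p_\circ\|^2$: the inequality $h_k\leq 2g_k$ follows from $\|a\|^2-\|b\|^2\leq 2a^T(a-b)$ together with optimality of $v_k$, the per-step decrease $h_{k+1}\leq h_k - h_k^2/(16R^2)$ follows from exact line search (with the clamped $\alpha=1$ case only improving matters, since $h_k\leq R^2\leq 8R^2$ keeps the unified recursion valid), and the reciprocal trick gives $h_k\leq 16R^2/k$. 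Your translation of the relative-error target into $h_k\leq\delta_*^2\varepsilon$ via $(1-\varepsilon)^{-2}-1\geq\varepsilon$ is also sound. This is almost certainly more elementary than whatever appears in \cite{kalsep}, which the paper characterizes as ``more complicated.''

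The one place you should be more explicit — and you yourself flag it — is the relationship to TA as actually described in the paper. As stated there, TA terminates the moment the current iterate $p'$ is a witness (Step~5 outputs $W$ and stops), and when $p_\circ\notin C$ a witness is inevitably reached and no pivot exists past that point. Thus what you are analyzing is not the algorithm of Section~\ref{sec2} verbatim, but a variant that keeps iterating along the Frank--Wolfe direction $v_k=\arg\min\{(p_k-p_\circ)^Tx:x\in C\}$ even after the pivot inequality \eqref{pivot} fails. That is the right move (and surely what the ``more complicated'' variant in \cite{kalsep} must do in some form), but your write-up should say so affirmatively rather than leaving it as an acknowledged ``obstacle.'' Concretely: (i) Remark~\ref{rem1} already shows the pivot test is performed by linear minimization over $C$, so the linear minimizer $v_k$ is computed whether or not a pivot exists; (ii) $g_k=(p_k-p_\circ)^T(p_k-v_k)\geq 0$ because $p_k\in C$ and $v_k$ is a minimizer, so $\alpha\geq 0$ and Proposition~\ref{prop1}'s line search is well-defined; (iii) $g_k=0$ forces $p_k=p^*$ by first-order optimality of projection, in which case $h_k=0$ and the iteration may terminate. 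With those three points made explicit, the proof is complete and self-contained. Everything else — the bound $\|v_k-p_k\|\leq 2R$, the uniqueness of $p^*$, and the final iteration count $O(R^2/(\delta_*^2\varepsilon))$ — is fine as written.
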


A stronger version of a pivot can be defined.

\begin{definition} \label{strictpivot} Given $p' \in C$,  $v \in C$ is a {\it strict} $p_\circ$-pivot at $p'$ (or simply a strict pivot) if $\angle p'p_\circ v \geq \pi/2$. Equivalently, $(p'-p_\circ)^T (v-p_\circ) \leq 0$. That is,
\begin{equation}
(p'-p_\circ)^Tv \leq p'^Tp_\circ - \Vert p_\circ \Vert^2.
\end{equation}
\end{definition}

\begin{thm}  \label{thm3} {\rm (Strict Distance Duality)}  $ p_\circ \in C$ if and only if for each  $p' \in C$, $p' \not = p_\circ$, there exists a strict $p_\circ$-pivot $v \in C$ (here $v$ can coincide with $p_\circ$ itself). \qed
\end{thm}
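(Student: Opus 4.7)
The plan is to prove both implications directly, each in a few lines; the real substance is packed into the parenthetical permission to choose $v = p_\circ$, together with the variational characterization of the metric projection onto a closed convex set.

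For the forward direction, I would assume $p_\circ \in C$ and, for any $p' \in C$ with $p' \neq p_\circ$, simply take $v := p_\circ \in C$. With this choice,
\[
(p' - p_\circ)^T (v - p_\circ) = (p' - p_\circ)^T \cdot 0 = 0 \leq 0,
\]
so $v$ is a strict $p_\circ$-pivot. The parenthetical clause in the statement is exactly what makes this direction automatic.

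For the reverse direction I would argue the contrapositive: assuming $p_\circ \notin C$, I would exhibit a single $p' \in C \setminus \{p_\circ\}$ at which no strict pivot exists. Let $p^*$ be the unique nearest point of $C$ to $p_\circ$, which exists because $C$ is compact and convex. The first-order optimality condition for this projection gives
\[
(p^* - p_\circ)^T (x - p^*) \geq 0 \quad \text{for every } x \in C.
\]
Decomposing $x - p_\circ = (x - p^*) + (p^* - p_\circ)$ and taking the inner product with $p^* - p_\circ$, this becomes
\[
(p^* - p_\circ)^T (x - p_\circ) \geq \|p^* - p_\circ\|^2 > 0,
\]
the strict inequality being forced by $p_\circ \notin C$, which guarantees $p^* \neq p_\circ$. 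Consequently no $v \in C$ satisfies $(p^* - p_\circ)^T (v - p_\circ) \leq 0$, so $p' := p^*$ admits no strict pivot; since $p^* \neq p_\circ$, it is an admissible witness to the failure of the right-hand condition, completing the contrapositive.

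I do not expect any real obstacle here: the entire argument rests on the variational inequality for the projection onto a closed convex set. The only subtlety worth flagging is the role of the two technical allowances in the statement. The exclusion $p' \neq p_\circ$ is only there to avoid the vacuous case in which any $v$ would satisfy the strict-pivot inequality, while the permission $v = p_\circ$ is essential for the forward direction, since at an iterate $p'$ sitting near an extreme point of $C$ there may simply be no nontrivial $v \in C$ making an obtuse angle at $p_\circ$.
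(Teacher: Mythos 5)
Your proof is correct. Note first that the paper itself gives no proof of Theorem \ref{thm3}: the result is imported from \cite{kalsep} with a bare \qed, so there is no in-paper argument to compare against line by line.

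On the substance: the forward implication is exactly as you say. The parenthetical allowing $v = p_\circ$ turns that direction into a one-liner, and your observation that this allowance is not merely a convenience but sometimes forced (e.g.\ when $p_\circ$ is an extreme point of $C$ and $p'$ lies along an edge into it, $v=p_\circ$ may be the only point of $C$ in the closed half-space) is the right way to read why the hypothesis is phrased this way. For the contrapositive you invoke the variational inequality $(p^*-p_\circ)^T(x-p^*)\ge 0$ for the metric projection $p^*$ of $p_\circ$ onto the compact convex set $C$, and the decomposition $x-p_\circ=(x-p^*)+(p^*-p_\circ)$ gives $(p^*-p_\circ)^T(x-p_\circ)\ge\|p^*-p_\circ\|^2>0$ for all $x\in C$, so $p^*$ admits no strict pivot and $p^*\neq p_\circ$; this is exactly the obstruction one needs. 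This is the standard route and is almost certainly what \cite{kalsep} does as well, since the same projection argument underlies the non-strict duality (Theorem \ref{thm1}) and the notion of a witness (Definition \ref{witness}): your $p^*$ is precisely a $p_\circ$-witness in the paper's terminology, and it would be worth saying so explicitly to tie the contrapositive to the equivalence $p_\circ\notin C\iff\exists\text{ witness}$ already asserted in Theorem \ref{thm1}. No gap; the argument is complete.
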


An alternative complexity bound can be stated.

\begin{thm}  \label{thm4} Suppose a ball of radius $\rho >0$ centered at $p_\circ$ is contained in the relative interior of $C$. If TA uses a strict pivot in each iteration,  $p_\varepsilon \in C$ satisfying $\Vert p_\varepsilon - p_\circ \Vert  \leq \varepsilon$  can be computed in $O\big ((R/\rho)^{2} \ln {1}/{\varepsilon} \big )$ iterations. \qed
\end{thm}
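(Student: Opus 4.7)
The plan is to establish a per-iteration geometric contraction
\[
\Vert p'' - p_\circ \Vert^2 \;\leq\; \bigl(1 - \rho^2/(4R^2)\bigr) \Vert p' - p_\circ \Vert^2,
\]
and then iterate it. Since $\Vert p_0 - p_\circ \Vert \leq R$ and $\ln(1-x) \leq -x$, this yields $O\bigl((R/\rho)^2 \ln(1/\varepsilon)\bigr)$ iterations to reach $\Vert p_k - p_\circ \Vert \leq \varepsilon$, as claimed.

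The first and critical step is to exhibit, at the current iterate $p' \neq p_\circ$, a strict pivot whose associated inner product is \emph{strongly} negative, exploiting the hypothesis that the ball of radius $\rho$ about $p_\circ$ is contained in $C$. Set $a = \Vert p' - p_\circ \Vert$ and $\hat{u} = (p' - p_\circ)/a$. The point $w = p_\circ - \rho \hat{u}$ lies in $C$ by the ball hypothesis, and satisfies $(p'-p_\circ)^T(w-p_\circ) = -\rho a$. Consequently, the linear minimizer $v \in \arg\min\{(p'-p_\circ)^T x : x \in C\}$ (the natural pivot candidate of Remark~\ref{rem1}) satisfies
\[
c := (p'-p_\circ)^T(v-p_\circ) \;\leq\; -\rho a,
\]
so $v$ is a strict pivot in the sense of Definition~\ref{strictpivot} and moreover $-c \geq \rho a$.

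By Proposition~\ref{prop1}, the new iterate $p''$ is the nearest point to $p_\circ$ on the segment from $p'$ to $v$. Minimizing $\Vert (1-\alpha)(p'-p_\circ) + \alpha(v-p_\circ) \Vert^2$ in $\alpha$ yields the identity
\[
\Vert p'' - p_\circ \Vert^2 \;=\; a^2 \;-\; \frac{(a^2-c)^2}{\Vert p'-v \Vert^2}.
\]
Plugging in $a^2 - c \geq -c \geq \rho a$ together with $\Vert p'-v \Vert \leq 2R$ (triangle inequality, since both $p', v \in C$ lie within distance $R$ of $p_\circ$) gives the contraction claimed at the outset.

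The main obstacle is securing the ``strongly strict'' inequality $-c \geq \rho a$ at every iterate: a generic strict pivot, produced only from $(p'-p_\circ)^T(v-p_\circ) \leq 0$, could have $|c|$ arbitrarily small, leading to negligible reduction in a single step. The ball hypothesis is precisely the additional ingredient that forces the linear minimizer over $C$ to be strongly strict, and hence supplies the geometric rate. Beyond this, the proof reduces to elementary quadratic manipulations and summing a geometric series.
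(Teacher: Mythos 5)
The paper states Theorem~\ref{thm4} only as a cited result from \cite{kalsep} and gives no proof, so there is no in-paper argument to compare against; your derivation, however, is correct and is the standard route to the linear rate. The interior-ball hypothesis forces the linear minimizer $v$ over $C$ to satisfy $c:=(p'-p_\circ)^T(v-p_\circ)\leq -\rho\,\Vert p'-p_\circ\Vert$, the one-dimensional projection identity gives $\Vert p''-p_\circ\Vert^2=a^2-(a^2-c)^2/\Vert p'-v\Vert^2$ with $\alpha_*\in[0,1]$ (guaranteed since $c\leq 0$), and combining $-c\geq\rho a$ with the diameter bound $\Vert p'-v\Vert\leq 2R$ yields the contraction factor $1-\rho^2/(4R^2)$, which then telescopes to $O((R/\rho)^2\ln(1/\varepsilon))$ iterations. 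You are right to single out the subtlety that a generic strict pivot, constrained only by $c\leq 0$, gives no uniform rate: in that case the reduction factor degrades to $1-a^2/(4R^2)$, which merely recovers the $O(1/\varepsilon^2)$ bound of Theorem~\ref{thm2}. Thus the theorem should be read as asserting that if TA is implemented with the linear minimizer (or any pivot meeting the same quantitative bound forced by the ball), the geometric rate holds — a reading your proof supplies cleanly.
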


\begin{definition}  A {\it Spherical-CHM} is the case of CHM where each $v \in S$ has unit norm and $p_\circ=0$. We say it has $\varepsilon$-property at $p' \in conv(S)$ with $\Vert p' \Vert > \varepsilon$, if there is a pivot $v$ such that
\begin{equation}
\Vert p' - v \Vert \geq \sqrt{1+ \varepsilon}.
\end{equation}
\end{definition}

As an example if the ball of radius $\sqrt{\varepsilon}$ is contained in $conv(S)$, then  Spherical-CHM has $\varepsilon$-property everywhere.   When this property is satisfied at each iteration we have the following improved complexity.

\begin{thm}  Consider a Spherical-CHM. If every iterate $p'$ in TA with $\Vert p' \Vert > \varepsilon$ that is not a witness has $\varepsilon$-property, then in $O(1/\varepsilon)$ iterations, TA either computes  a witness, or $p_\varepsilon \in conv(S)$ such that $\Vert p_\varepsilon \Vert \leq \varepsilon$. \qed
\end{thm}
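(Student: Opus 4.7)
The plan is to exhibit a per-iteration decrease on $\|p_k\|^2$ that is quadratic in $\|p_k\|^2+\varepsilon$ and then convert it into a potential that grows by a constant per iteration, giving the $O(1/\varepsilon)$ bound directly. Fix an iterate $p_k\in conv(S)$ with $\|p_k\|>\varepsilon$ that is not a witness; by the $\varepsilon$-property there is a pivot $v\in S$ with $\|p_k-v\|^2\geq 1+\varepsilon$, and by Proposition \ref{prop1} the next iterate is the point of $\overline{p_kv}$ nearest to $p_\circ=0$, satisfying
\begin{equation*}
\|p_{k+1}\|^2=\|p_k\|^2-\frac{\bigl(p_k^T(v-p_k)\bigr)^2}{\|v-p_k\|^2}.
\end{equation*}
Expanding $\|p_k-v\|^2=\|p_k\|^2-2p_k^Tv+1$ together with $\|v\|=1$ and the $\varepsilon$-property yields $p_k^Tv\leq(\|p_k\|^2-\varepsilon)/2$, hence $p_k^T(v-p_k)\leq -(\|p_k\|^2+\varepsilon)/2$. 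Since $p_k\in conv(S)$ and every element of $S$ has unit norm, $\|p_k\|\leq 1$, so $\|v-p_k\|\leq 1+\|p_k\|\leq 2$. Combining the numerator and denominator bounds gives the key estimate
\begin{equation*}
\|p_k\|^2-\|p_{k+1}\|^2\geq\frac{(\|p_k\|^2+\varepsilon)^2}{16}.
\end{equation*}

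Next I would introduce the potential $\Phi_k=1/(\|p_k\|^2+\varepsilon)$ and show it increases by an absolute constant per iteration. Using $\|p_{k+1}\|\leq\|p_k\|$ and the key estimate,
\begin{equation*}
\Phi_{k+1}-\Phi_k=\frac{\|p_k\|^2-\|p_{k+1}\|^2}{(\|p_k\|^2+\varepsilon)(\|p_{k+1}\|^2+\varepsilon)}\geq\frac{\|p_k\|^2+\varepsilon}{16(\|p_{k+1}\|^2+\varepsilon)}\geq\frac{1}{16}.
\end{equation*}
By induction $\Phi_k\geq k/16$. As long as $\|p_k\|>\varepsilon$ we have $\Phi_k<1/(\varepsilon^2+\varepsilon)\leq 1/\varepsilon$, so the iteration count must satisfy $k<16/\varepsilon$; that is, within $O(1/\varepsilon)$ steps either some $p_k$ fails the $\varepsilon$-property and is declared a witness via Theorem \ref{thm1}, or $\|p_k\|\leq\varepsilon$.

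The main obstacle is choosing the correct potential. The naive choice $1/\|p_k\|^2$ yields only a $\log(1/\varepsilon)/\varepsilon$ bound because its increment depends on $\|p_k\|^2$, which can shrink to $\varepsilon^2$; introducing the additive $\varepsilon$ shift in the denominator is what collapses the square on the right-hand side of the key estimate into a clean constant increment. The remaining steps are routine: verifying the pivot-vs.-witness dichotomy via Theorem \ref{thm1}, checking the edge case where the line-segment minimiser on $\overline{p_kv}$ coincides with $v$ (in which case the same estimate holds, as $\|v\|=1\leq\|p_k\|$ would force $\|p_k\|=1$ and a trivial termination), and noting that the hypothesis $\|p_0\|\leq 1$ ensures $\Phi_0\geq 1/(1+\varepsilon)$, which only sharpens the bound.
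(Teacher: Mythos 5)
Your argument is correct. The paper itself gives no proof of this theorem (it is imported from the Spherical-TA reference \cite{kalsphere} with only a $\qed$), so there is nothing internal to compare against; your line of reasoning — the per-step drop $\|p_k\|^2-\|p_{k+1}\|^2\geq(\|p_k\|^2+\varepsilon)^2/16$ followed by the potential $\Phi_k=1/(\|p_k\|^2+\varepsilon)$ gaining a constant per step — is the natural way to turn the $\varepsilon$-property into an $O(1/\varepsilon)$ bound and is almost certainly the intended route.

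One small correction to the edge case: you claim that if the segment minimizer equals $v$, then $\|p_k\|=1$ forces a ``trivial termination.'' In fact that branch cannot arise at all. The minimizer is $v$ only when the step size $\alpha^*=-p_k^T(v-p_k)/\|v-p_k\|^2\geq 1$, which rearranges to $p_k^Tv\geq 1$; by Cauchy--Schwarz with $\|v\|=1$ and $\|p_k\|\leq 1$ this forces $p_k=v$ and hence $\|p_k-v\|=0$, contradicting $\|p_k-v\|\geq\sqrt{1+\varepsilon}$. So the case is vacuous and the formula $\|p_{k+1}\|^2=\|p_k\|^2-(p_k^T(v-p_k))^2/\|v-p_k\|^2$ always applies.

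Everything else checks out: the pivot-vs-witness dichotomy is exhaustive by Definition \ref{witness} (no pivot at $p'$ is literally the definition of a witness), the monotonicity $\|p_{k+1}\|\leq\|p_k\|$ used to cancel the ratio $(\|p_k\|^2+\varepsilon)/(\|p_{k+1}\|^2+\varepsilon)\geq 1$ holds since $p_k$ lies on the segment $[p_k,v]$, and $1/(\varepsilon^2+\varepsilon)\leq 1/\varepsilon$ closes the count at $k<16/\varepsilon$.
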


The overall number of iterations in TA is independent of the nature of $C$.  However, the complexity of each iteration is dependent on $C$ and its description.
The simplest case is when $C=conv(S)$, $S$ a finite set of points. The following theorem, proved in \cite{AKZ}, was stated for the case where $C= conv(S)$, however it can be stated for the general case of $C$. We thus state the theorem for this general case as we will use it in this article.

\begin{thm}  \label{thmCHMX}  Consider the General-CHM. Let $\widehat S= \{ \widehat v_1, \dots, \widehat v_N\}$ be a subset  of $C$. Given $p_\circ \in \mathbb{R}^m$, consider testing if $p_\circ \in C$.
Given $\varepsilon \in (0, 1)$, the complexity of testing if there exists an $\varepsilon$-approximate solution in $conv(\widehat S)$ is
\begin{equation} \label{eqAA}
O \bigg (m N^2+ \frac{N}{\varepsilon^2} \bigg).
\end{equation}

In particular, suppose in testing if $p_\circ \in C$, the Triangle Algorithm computes an $\varepsilon$-approximate solution $p_\varepsilon$ by examining only the elements of a subset $\widehat S= \{ \widehat v_1, \dots, \widehat v_N\}$ of $C$.  Then the complexity of testing if  there exists an $\varepsilon$-approximate solution  $p_\varepsilon \in C$ is  as stated in (\ref{eqAA}). \qed
\end{thm}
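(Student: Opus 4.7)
The plan is to run the Triangle Algorithm on the CHM instance with point set $\widehat S$, and to argue that after a single $O(mN^2)$ preprocessing step each of the $O(1/\varepsilon^2)$ iterations guaranteed by Theorem~\ref{thm2} can be executed in only $O(N)$ arithmetic operations, independent of the ambient dimension $m$.

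First I would precompute the Gram matrix $G \in \mathbb{R}^{N \times N}$ with entries $G_{ij} = \widehat v_i^T \widehat v_j$, together with the vector $g_i = p_\circ^T \widehat v_i$; this takes $O(mN^2 + mN) = O(mN^2)$ arithmetic. Throughout the run of TA I would maintain, as auxiliary state, the convex combination coefficients $(\alpha_1,\dots,\alpha_N)$ representing the current iterate $p' = \sum_j \alpha_j \widehat v_j$, the scalar $\|p'\|^2$, and the two arrays $w_i = (p')^T \widehat v_i$ and $u_i = (p'-p_\circ)^T \widehat v_i = w_i - g_i$.

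These invariants are exactly what is needed to reduce one TA iteration to $O(N)$ work. By Remark~\ref{rem1}, a pivot is any $\widehat v_k$ with $u_k \leq \tfrac{1}{2}(\|p'\|^2 - \|p_\circ\|^2)$, so a single scan of $u$ either produces one or certifies that $p'$ is a witness. Given such $\widehat v_k$, the step size from Proposition~\ref{prop1} is computable in $O(1)$ from $w_k$, $G_{kk}$ and $\|p'\|^2$, and the move $p'' = (1-\alpha)p' + \alpha \widehat v_k$ translates into the updates $\alpha_j \leftarrow (1-\alpha)\alpha_j$ followed by $\alpha_k \leftarrow \alpha_k + \alpha$, together with $w_i \leftarrow (1-\alpha) w_i + \alpha G_{ki}$ and $u_i \leftarrow w_i - g_i$ (all $O(N)$), and finally $\|p''\|^2 \leftarrow (1-\alpha)^2 \|p'\|^2 + 2\alpha(1-\alpha) w_k + \alpha^2 G_{kk}$ in $O(1)$.

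Combining the $O(mN^2)$ preprocessing with $O(1/\varepsilon^2)$ iterations at $O(N)$ each yields exactly (\ref{eqAA}). The second assertion requires no new work: if an entire run of TA on $C$ queries only the points $\widehat v_1, \dots, \widehat v_N$, then the same bookkeeping applies verbatim, and the complexity is governed by $\widehat S$ alone and not by the description of $C$. The only subtle point, and what I expect to be the main obstacle, is choosing the invariants correctly: one must notice that the pivot test, the step size and the iterate update all reduce to rank-one operations on $(w_i)$ and $(u_i)$ once $G$ and $g$ are cached, which is what decouples the factor $m$ from the $1/\varepsilon^2$ term and prevents the total from degrading to $O(mN/\varepsilon^2)$.
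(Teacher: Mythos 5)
Your approach is the standard one underlying this theorem (the paper does not re-prove it, but defers to~\cite{AKZ}): precompute the Gram matrix $G_{ij}=\widehat v_i^T\widehat v_j$ and the vector $g_i=p_\circ^T\widehat v_i$ in $O(mN^2)$, then show each TA iteration reduces to rank-one bookkeeping in $O(N)$, giving $O(mN^2 + N/\varepsilon^2)$ total. The invariants you maintain (coefficients $\alpha$, $\|p'\|^2$, $w_i=(p')^T\widehat v_i$, $u_i=w_i-g_i$) are the right ones, the pivot test via a scan of $u$ against $\tfrac12(\|p'\|^2-\|p_\circ\|^2)$ follows directly from Remark~\ref{rem1}, and the $O(N)$ update formulas for $w$, $u$, $\alpha$ and the $O(1)$ update for $\|p'\|^2$ are correct. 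One small imprecision: the step size $\alpha$ from Proposition~\ref{prop1} has numerator $(p_\circ-p')^T(v-p')=g_k-p_\circ^Tp'-w_k+\|p'\|^2$, so in addition to $w_k$, $G_{kk}$, $\|p'\|^2$ you also need $g_k$ (which you do cache) and the scalar $p_\circ^Tp'$, which you must either maintain (it has the $O(1)$ update $p_\circ^Tp''=(1-\alpha)p_\circ^Tp'+\alpha g_k$) or recompute as $\sum_j\alpha_j g_j$ in $O(N)$; either way this fits the per-iteration budget, so the overall bound is unaffected. The observation that the second clause of the theorem follows verbatim because the bookkeeping depends only on the queried subset $\widehat S$ and not on the description of $C$ is also correct.
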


We will use the above results in the article to describe a version of TA for SHM and to derive its complexity.  The proposed Spectrahull Triangle Algorithm can essentially solve SDP feasibility having no recession direction and also  SDP optimization over a bounded feasible set. However, in this article we only focus on SHM.

\section{Spectrahull and Spectrahull Membership Problem} \label{sec3}
Let $\mathbb{S}^n$ denote the set of $n \times n$ real symmetric matrices. As usual the notation $X \succeq 0$ means $X$ lies in $\mathbb{S}_+^n$, the cone of positive semidefinite matrices in $\mathbb{S}^n$. $\mathbb{S}^n$ is a Hilbert space, where its inner product, also refereed as Frobenious inner product, is denoted by any of the following equivalent notations
\begin{equation}  \label{eeq3}
\langle X, Y \rangle_F=Tr(XY)=X \bullet Y = \sum_{i=1}^n \sum_{j=1}^n x_{ij} y_{ij}.
\end{equation}
The corresponding induced norm is\emph{}
\begin{equation}  \label{eeq4}
\Vert X \Vert_F= Tr(X^2)^{1/2}= (X \bullet X)^{1/2}.
\end{equation}

The term {\it spectraplex}, defined in \cite{Parrilo}, is associated with the set

\begin{equation}  \label{eeq5}
\mathbf{\Delta}_n=\{X \in \mathbb{S}^n:  Tr(X)=1, \quad  X \succeq 0  \}.
\end{equation}

It is  an example of {\it spectrahedron}, see \cite{Ramana}. It is an analogue of the unit simplex in $\mathbb{R}^n$.  It is known that the extreme points of $\mathbf{\Delta}_n$
are rank-one matrices of the form $vv^T$, where $v \in \mathbb{R}^n$ is of unit norm (see Lemma \ref{lem1} and Corollary \ref{spectraplex}).

\begin{definition}  \label{SDP-Hull} Given a subset $\mathbf{S}=\{A_1, \dots A_m\}$ of $\mathbb{S}^n$, we define its {\it spectrahull}, denoted by $SH(\mathbf{S})$,  as
\begin{equation}  \label{eeq6}
\bigg \{p(X) \equiv (A_1 \bullet X, \dots, A_m \bullet X)^T: X \in \mathbf{\Delta}_n \bigg \}.
\end{equation}
\end{definition}

\begin{remark}  \label{rem2} $SH(\mathbf{S})$ is a subset of $\mathbb{R}^m$, defined by the linear map $p: \mathbf{\Delta}_n  \rightarrow \mathbb{R}^m$.  Suppose for each $i=1, \dots, m$, $A_i$ is a diagonal matrix, say ${\rm diag}(a^{(i)})$, $a^{(i)} \in \mathbb{R}^n$.  For $j=1, \dots, n$
setting $v_j= (a^{(1)}_j, \dots, a^{(m)}_j)^T \in \mathbb{R}^m$, it follows that if  $S= \{v_1, \dots, v_n\}$, then $SH(\mathbf{S})= conv(S)$,
the standard convex hull of $S$. This justifies the name spectrahull.
\end{remark}

\begin{definition}  \label{SHMP} Given $\mathbf{S}=\{A_1, \dots A_m\} \subset   \mathbb{S}^n$, and $b \in \mathbb{R}^m$, let
{\it spectrahull membership} (SHM) stand for testing if $b \in SH(\mathbf{S})$.
\end{definition}

\begin{remark} \label{rem3} When $A_i$'s are diagonal matrices as in  Remark \ref{rem2},  SHM is identical with the CHM that tests if $b \in conv(S)$.
That is,  $b \in SH(\mathbf{S})$ if and only if  $b \in conv(S)$.
\end{remark}

Given $\mathbf{S}=\{A_1, \dots A_m\} \subset   \mathbb{S}^n$, and $b \in \mathbb{R}^m$, $b \not = 0$, {\it SDP feasibility} is testing the feasibility of:
\begin{equation} \label{eqp}
\mathbf{P}=\{X \in \mathbb{S}^n:  A_i \bullet X = b_i, \quad i=1, \dots, m,  \quad X \succeq 0\}.
\end{equation}

Just as homogeneous CHM is a fundamental problem in LP,  homogenous SHM, the case where  $b= 0$, is a fundamental problem in SDP. As shown next, the bounded case of SDP feasibility is reducible to a homogeneous SHM.

\begin{prop} \label{prop2} Suppose $\mathbf{P}$  given in (\ref{eqp}) has no recession direction, i.e.
\begin{equation}
Rec(\mathbf{P})= \{D \in \mathbb{S}^n:  A_i \bullet D =0, \quad  i=1, \dots, m, \quad D \succeq 0, \quad D \not =0 \} = \emptyset.
\end{equation}
Let $\mathbf{S}'=\{A'_1, \dots A'_m\} \subset   \mathbb{S}^{n+1}$, where
\begin{equation}  \label{eeq8}
A'_i={\rm diag}(A_i, -b_i)=
\begin{pmatrix}
A_i  &0 \\
0&-b_i\\
\end{pmatrix}.
\end{equation}
Then $\mathbf{P}$ is feasible if and only if
\begin{equation}  \label{eeq9}
0\in SH(\mathbf{S}') =
\bigg \{p(X')=(A_1' \bullet X', \dots, A_m' \bullet X')^T: \quad  X' \in \mathbf{\Delta}_{n+1} \bigg \}.
\end{equation}
\end{prop}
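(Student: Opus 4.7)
The plan is to parametrize a generic $X' \in \mathbf{\Delta}_{n+1}$ as a block matrix
\[
X' = \begin{pmatrix} Y & z \\ z^T & t \end{pmatrix},\qquad Y \in \mathbb{S}^n,\ z \in \mathbb{R}^n,\ t \in \mathbb{R},
\]
and to compute $p(X')$ explicitly in terms of this block decomposition. Because each $A'_i$ is block-diagonal, the off-diagonal block $z$ is annihilated by the trace, giving $A'_i \bullet X' = A_i \bullet Y - b_i t$. The spectraplex constraints $Tr(X') = 1$ and $X' \succeq 0$ translate to $Tr(Y) + t = 1$, $Y \succeq 0$, and $t \geq 0$ (positive semidefiniteness of the $2\times 2$ principal blocks yields the latter two, and in fact forces $z = 0$ when $t = 0$, which will be the crucial structural fact).

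For the forward direction, given $X \in \mathbf{P}$ I would scale: set $t = 1/(Tr(X)+1) > 0$, $Y = tX$, $z = 0$, assemble $X'$, and verify $Tr(X') = 1$, $X' \succeq 0$, and $A'_i \bullet X' = t(A_i\bullet X) - t b_i = 0$ for each $i$. This produces $X' \in \mathbf{\Delta}_{n+1}$ with $p(X') = 0$.

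For the converse, suppose $X' \in \mathbf{\Delta}_{n+1}$ achieves $p(X') = 0$. Split on $t$. If $t > 0$, then $X := Y/t$ is positive semidefinite and satisfies $A_i \bullet X = (A_i \bullet Y)/t = b_i$, so $X \in \mathbf{P}$. If $t = 0$, the positive semidefiniteness of $X'$ forces the entire last row and column to vanish, so $z = 0$ and $X' = \mathrm{diag}(Y, 0)$; then $Tr(Y) = 1$ (so $Y \neq 0$), $Y \succeq 0$, and $A_i \bullet Y = 0$ for all $i$, meaning $Y \in Rec(\mathbf{P})$, contradicting the hypothesis $Rec(\mathbf{P}) = \emptyset$. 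Hence only the $t > 0$ case occurs, producing a feasible $X$.

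The only step requiring care is the $t=0$ case of the converse: one must invoke the standard fact that if a positive semidefinite matrix has a zero diagonal entry then the corresponding row and column are identically zero, to conclude $z = 0$ and thus to extract a genuine nonzero element of $Rec(\mathbf{P})$. Everything else is a direct unpacking of the block decomposition of $A'_i$ and $X'$.
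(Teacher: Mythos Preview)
Your proof is correct and follows essentially the same approach as the paper: block-decompose $X'$, compute $A'_i \bullet X' = A_i \bullet Y - b_i t$, scale in the forward direction, and in the converse divide by $t$ when $t>0$ while invoking the no-recession-direction hypothesis when $t=0$. The only minor difference is that you take extra care in the $t=0$ case to argue $z=0$ via the standard PSD fact; the paper's proof simply observes that the principal block $Y$ already satisfies $Y \succeq 0$, $Tr(Y)=1$, and $A_i \bullet Y = 0$, which is all that is needed for $Y \in Rec(\mathbf{P})$---so your $z=0$ step, while correct, is not actually required.
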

\begin{proof} Assume $X \in \mathbf{P}$. Letting $X'={\rm diag}(X,1)$, it is easy to see $p(X')=0$. Clearly, $X' \succeq 0$, $Tr(X') >0$.  Thus letting  $X''=X'/Tr(X')$ proves $0 \in SH(\mathbf{S}')$.
Conversely, suppose for some $X'  \in \mathbf{\Delta}_{n+1}$, $p(X')=0$.  We can thus write
\begin{equation}  \label{eeq10}
X'=
\begin{pmatrix}
X  &v \\
v^T&\alpha\\
\end{pmatrix}, \quad X \in \mathbb{S}^n,  \quad v \in \mathbb{R}^n, \quad  \alpha \geq 0, \quad Tr(X)+ \alpha =1.
\end{equation}
From the structure of $A_i'$ we have,
\begin{equation}  \label{eeq11}
A_i' \bullet X'= A_i \bullet X - \alpha b_i=0,  \quad i=1, \dots, m.
\end{equation}
But $\alpha \not =0$, otherwise $\mathbf{P}$ has a recession direction. Dividing equations in (\ref{eeq11}) by $\alpha$, we get  $X/\alpha \in \mathbf{P}$.
\end{proof}

\section{Distance Duality and Carath\'eodory Theorems for SHM} \label{sec4}

\begin{prop} \label{prop3} Given $\mathbf{S}= \{A_1, \dots, A_m\} \subset \mathbb{S}^n$, $SH(\mathbf{S})$ is a compact convex subset of $\mathbb{R}^m$. In particular, for each $p_\circ \in SH(\mathbf{S})$ there exists $X_\circ \in \mathbf{\Delta}_n$ such that $p(X_\circ)=p_\circ$.
Furthermore, if $p_\circ$ is an extreme point of $SH(\mathbf{S})$,
there exists an extreme point $X_\circ$ of $\mathbf{\Delta}_n$ such that $p(X_\circ)=p_\circ$.
\end{prop}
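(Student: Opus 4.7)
The plan is to break the proposition into three claims and handle them in order of increasing subtlety: compactness/convexity of $SH(\mathbf{S})$, surjectivity of $p$ onto $SH(\mathbf{S})$, and preservation of extremality.

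First, I would argue that $\mathbf{\Delta}_n$ is itself a compact convex subset of $\mathbb{S}^n$. Convexity follows because $\mathbf{\Delta}_n$ is the intersection of the affine hyperplane $\{Tr(X)=1\}$ with the convex cone $\mathbb{S}^n_+$. Compactness follows because $\mathbf{\Delta}_n$ is closed (both defining conditions are closed) and bounded (since $X\succeq 0$ and $Tr(X)=1$ force $\Vert X\Vert_F \le 1$, as the eigenvalues are nonnegative and sum to $1$, so the sum of their squares is at most $1$). The map $p: \mathbb{S}^n \to \mathbb{R}^m$ given by $X \mapsto (A_1\bullet X, \dots, A_m\bullet X)^T$ is linear, hence continuous and affine. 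Therefore $SH(\mathbf{S}) = p(\mathbf{\Delta}_n)$ is a continuous affine image of a compact convex set, which is compact and convex in $\mathbb{R}^m$.

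The surjectivity claim is immediate from the definition: any $p_\circ \in SH(\mathbf{S})$ is by construction of the form $p(X_\circ)$ for some $X_\circ \in \mathbf{\Delta}_n$.

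The extremality claim is the only step with any real content, and I would handle it via a standard fiber argument. Suppose $p_\circ$ is an extreme point of $SH(\mathbf{S})$, and consider the fiber
\begin{equation}
F = p^{-1}(p_\circ) \cap \mathbf{\Delta}_n = \{X \in \mathbf{\Delta}_n : p(X) = p_\circ\}.
\end{equation}
This set is nonempty by the previous paragraph, closed (as the intersection of $\mathbf{\Delta}_n$ with the closed affine subspace $p^{-1}(p_\circ)$), and bounded (contained in $\mathbf{\Delta}_n$), hence compact. It is also convex, as the intersection of two convex sets. By the Krein–Milman theorem, $F$ has an extreme point, call it $X_\circ$. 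I claim $X_\circ$ is extreme in $\mathbf{\Delta}_n$. Suppose instead that $X_\circ = \tfrac{1}{2}(X_1+X_2)$ with $X_1,X_2 \in \mathbf{\Delta}_n$ and $X_1 \neq X_2$. Applying $p$ yields $p_\circ = \tfrac{1}{2}(p(X_1) + p(X_2))$, so by the extremality of $p_\circ$ in $SH(\mathbf{S})$ we must have $p(X_1) = p(X_2) = p_\circ$, i.e.\ $X_1, X_2 \in F$. But this contradicts $X_\circ$ being extreme in $F$. Hence $X_\circ$ is an extreme point of $\mathbf{\Delta}_n$, as required.

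The main obstacle is the extreme point step, but it is only mildly subtle; the key conceptual point is that the extremality of $p_\circ$ forces the fiber $F$ to be ``stable'' under convex combinations from all of $\mathbf{\Delta}_n$, so any extreme point of $F$ is automatically extreme in $\mathbf{\Delta}_n$. No properties specific to rank-one matrices are needed here, since the statement only asserts the existence of such an $X_\circ$ without describing its rank; that characterization is relegated to the later lemmas.
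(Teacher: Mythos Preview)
Your proof is correct. For compactness and convexity you and the paper argue identically: $\mathbf{\Delta}_n$ is compact convex and $p$ is linear, so the image is compact convex. The only real difference is in the extreme-point step. The paper takes an arbitrary preimage $X_\circ\in\mathbf{\Delta}_n$ of $p_\circ$, writes it (via Krein--Milman applied to $\mathbf{\Delta}_n$) as a finite convex combination $\sum_{i=1}^t\alpha_iX_i$ of extreme points of $\mathbf{\Delta}_n$, and observes that if no $X_i$ mapped to $p_\circ$ then $p_\circ=\sum_i\alpha_ip(X_i)$ would be a nontrivial convex combination in $SH(\mathbf{S})$, contradicting extremality. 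You instead apply Krein--Milman to the fiber $F=p^{-1}(p_\circ)\cap\mathbf{\Delta}_n$ and show that any extreme point of $F$ is already extreme in $\mathbf{\Delta}_n$. Your fiber argument is the standard lemma that extreme points lift through affine maps, and is arguably a bit cleaner; the paper's version is essentially the contrapositive of the same idea, organized around decomposing one fixed preimage rather than studying the whole fiber. Neither approach gains or loses anything substantive over the other.
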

\begin{proof} Given $X, Y \in  \mathbf{\Delta}_n$, for each $\alpha \in [0,1]$, $\alpha X+(1-\alpha)Y \in \mathbf{\Delta}_n$. Thus $SH(\mathbf{S})$ is convex.   For each $X \in \mathbf{\Delta}_n$, $\Vert X \Vert_F \leq 1$.  Thus

\begin{equation}  \label{eeq12}
|A_i \bullet X| \leq \Vert A_i \Vert_F \Vert X \Vert_F \leq \Vert A_i \Vert_F, \quad  \forall i=1, \dots, m.
\end{equation}
Hence $SH(\mathbf{S})$ is  bounded.  Let $\{X_k \in \mathbf{\Delta}_n: k \geq 1\}$ be a sequence with
$p(X_k)$ convergent to $u \in \mathbb{R}^m$.  Since $\mathbf{\Delta}_n$ is compact, the sequence  has an accumulation point, $X_* \in \mathbf{\Delta}_n$. Then by continuity of the map $p(X)$, we have $u=p(X_*)$.
Thus $SH(\mathbf{S})$ is closed.   Hence $SH(\mathbf{S})$ is compact.

Suppose $p_\circ$ is an extreme point of $SH(\mathbf{S})$ but whenever $p_\circ=p(X_\circ)$,
$X_\circ \in \mathbf{\Delta}_n$, $X_\circ$ is not an extreme point. Then $X_\circ$ is convex combination of a finite number, $t$, of extreme  points of $\mathbf{\Delta}_n$,
$X_\circ= \sum_{i=1}^t \alpha_i X_i$, $\sum_{i=1}^t \alpha_i=1$, $\alpha_i >0$, each $X_i$ an extreme point of $\mathbf{\Delta}_n$. By assumption then $p(X_i) \not = p_\circ$ for all $i$.  However, $p(X_\circ)= \sum_{i=1}^t\alpha_ip(X_i)$,  contradicting that $p_\circ$ is a an extreme point.
\end{proof}

\begin{remark} The spectrahull  $SH(\mathbf{S})$ is the image of $\mathbf{\Delta}_n$ under the mapping $p(X)= (A_1 \bullet X, \dots, A_m \bullet X)^T$.  According to the above theorem the inverse image of a vertex includes a vertex of $\mathbf{\Delta}_n$.  The simplest case of $SH(\mathbf{S})$ is when $m=1$. In this case $SH(\mathbf{S})$ is merely an interval and can be shown to have the extreme eigenvalues of $A_1$ as its endpoints, see Lemma \ref {lem1}.
\end{remark}

\begin{thm} {\rm (Distance Dualities)} The following two conditions are equivalent:

(1)  $b \in SH(\mathbf{S})$.

(2) For each $X'  \in \mathbf{\Delta}_n$, there exists $V \in \mathbf{\Delta}_n$ such that
\begin{equation} \label{eeq12half}
\Vert p(X')- p(V)\Vert \geq \Vert b-p(V) \Vert.
\end{equation}
Equivalently, the following two conditions are equivalent:

(1') $b \not \in SH(\mathbf{S})$

(2') There exists $W \in \mathbf{\Delta}_n$  such that
\begin{equation}  \label{eeq13}
\Vert p(W)- p(X) \Vert < \Vert b - p(X) \Vert, \quad \forall X \in \mathbf{\Delta}_n.
\end{equation}
\end{thm}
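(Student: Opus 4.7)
The plan is to reduce the statement to the General-CHM distance duality (Theorem \ref{thm1}) applied with $C = SH(\mathbf{S})$ and $p_\circ = b$, and then lift pivots and witnesses from $SH(\mathbf{S}) \subset \mathbb{R}^m$ back to $\mathbf{\Delta}_n$ via the map $p$.

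First I would invoke Proposition \ref{prop3} to ensure that $C = SH(\mathbf{S})$ is a compact convex subset of $\mathbb{R}^m$, so that the hypotheses of Theorem \ref{thm1} are satisfied. Setting $p_\circ = b$, Theorem \ref{thm1} immediately yields the equivalence: $b \in SH(\mathbf{S})$ iff for every $p' \in SH(\mathbf{S})$ there exists a pivot $v \in SH(\mathbf{S})$ with $\Vert p' - v \Vert \geq \Vert b - v \Vert$.

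Next I would translate this statement from $SH(\mathbf{S})$ back to $\mathbf{\Delta}_n$. By the very definition of $SH(\mathbf{S}) = p(\mathbf{\Delta}_n)$, the map $p : \mathbf{\Delta}_n \to SH(\mathbf{S})$ is surjective; hence every $p' \in SH(\mathbf{S})$ has the form $p' = p(X')$ for some $X' \in \mathbf{\Delta}_n$, and every candidate pivot $v$ has the form $v = p(V)$ for some $V \in \mathbf{\Delta}_n$. Substituting into the inequality yields (\ref{eeq12half}), completing the equivalence of (1) and (2). For the equivalence of (1') and (2'), the same translation applies: Theorem \ref{thm1} gives $b \notin SH(\mathbf{S})$ iff there exists a witness $p' \in SH(\mathbf{S})$ with $\Vert p' - x \Vert < \Vert b - x \Vert$ for all $x \in SH(\mathbf{S})$, and writing $p' = p(W)$ and $x = p(X)$ for $W, X \in \mathbf{\Delta}_n$ gives precisely (\ref{eeq13}).

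There is no serious obstacle in this argument; the work has been done upstream in two places. The main conceptual ingredient is Proposition \ref{prop3}, which establishes that $SH(\mathbf{S})$ inherits compactness and convexity from $\mathbf{\Delta}_n$ through the linear map $p$, and which (trivially, from the definition) guarantees surjectivity of $p$ onto $SH(\mathbf{S})$. The quantitative engine is Theorem \ref{thm1}. Once both are in place, the distance dualities for SHM reduce to a direct substitution and no additional matrix-theoretic arguments are needed; the subtler role of $\mathbf{\Delta}_n$ (e.g.\ existence of rank-one preimages via the spectral structure) is reserved for the later sections on pivot characterization and the Carath\'eodory-type theorem, and is not required here.
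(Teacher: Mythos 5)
Your proof is correct. For the direction (1) $\Rightarrow$ (2) you argue exactly as the paper does: apply Theorem \ref{thm1} to $C = SH(\mathbf{S})$, $p_\circ = b$, and lift the pivot $v \in SH(\mathbf{S})$ to $V \in \mathbf{\Delta}_n$ by surjectivity of $p$ (which follows from the definition of $SH(\mathbf{S})$ and is also recorded in Proposition \ref{prop3}).

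For the direction (2) $\Rightarrow$ (1), however, you take a genuinely different and more economical route than the paper. You observe that surjectivity of $p$ makes condition (2) \emph{equivalent} (not merely implied by) to the condition ``for every $p' \in SH(\mathbf{S})$ there exists a pivot $v \in SH(\mathbf{S})$'', and then invoke the ``only if'' direction of the biconditional in Theorem \ref{thm1} to conclude $b \in SH(\mathbf{S})$. The paper instead gives a constructive argument: it assumes (2), runs the Triangle Algorithm iteration $X'' = (1-\alpha_*)X' + \alpha_* V$ in $\mathbf{\Delta}_n$, appeals to the convergence analysis of TA to show $\Vert b - p(X_k)\Vert \to 0$, and then uses compactness of $\mathbf{\Delta}_n$ to extract an accumulation point $X_*$ with $p(X_*) = b$. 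Both arguments are sound. Yours is shorter and avoids re-deriving the TA convergence in the new setting; the paper's is constructive and previews the actual algorithm used in Section \ref{sec6}, which is presumably why the author preferred it. There is no gap in your reasoning, and the handling of (1$'$)/(2$'$) by the same lifting argument is likewise correct.
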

\begin{proof} Assume (1) is true. Given $X' \in \mathbf{\Delta}_n$, let $p'=p(X') \in SH(\mathbf{S})$. Then from  the distance duality theorem for General-CHM, Theorem \ref{thm1}, there exists a pivot $v \in SH(\mathbf{S})$ such that
\begin{equation}  \label{eeq14}
\Vert p' - v \Vert \geq \Vert b-v \Vert.
\end{equation}
From Proposition \ref{prop3}, $v=p(V)$ for some $V \in \mathbf{\Delta}_n$.  Thus (1) implies (2). Assume (2) holds. We prove algorithmically that (2) implies (1), using the Triangle Algorithm. Start with an arbitrary $X'$ in $\mathbf{\Delta}_n$. Then there exists $V \in \mathbf{\Delta}_n$  such that $\Vert p(X')- p(V)\Vert \geq \Vert b-p(V)\Vert$.  Let
\begin{equation}  \label{eeq15}
\alpha_* = {\rm argmin} \{ \Vert (1-\alpha) p(X') + \alpha p(V) \Vert: \alpha \in [0,1]\}.
\end{equation}
By linearity of $p(X)$,
\begin{equation}  \label{eeq16}
\Vert (1-\alpha_*) p(X') + \alpha_* p(V) \Vert =  \Vert p\big ((1-\alpha_*) X +  \alpha_* V \big) \Vert.
\end{equation}
Let
\begin{equation}  \label{eeq17}
X''= (1-\alpha_*) X' + \alpha_* V.
\end{equation}
Clearly $X'' \in \mathbf{\Delta}_n$. In this fashion, given $X' \in \mathbf{\Delta}_n$, we replace it with $X'' \in \mathbf{\Delta}_n$ and repeat the process. If this generates the sequence $X_k \in \mathbf{\Delta}_n$,  from  the convergence  analysis of the  Triangle Algorithm $\Vert b - p(X_k) \Vert$ converges to zero.  Then by the compactness of $SH(\mathbf{S})$ and $\mathbf{\Delta}_n$ it follows that there is an accumulation point of $X_k$'s, say $X_* \in \mathbf{\Delta}_n$ for which $p(X_*)=b$. Hence (1) holds.  The equivalence of (1'), (2') can be proved analogously.
\end{proof}

\begin{remark}   \label{rem4} In particular, SHM is equivalent to a General-CHM (see Definition \ref{defn1}), where  the vertices of $C$ are known implicitly.  More specifically, the computation of a pivot requires solving a special SDP problem, one that amounts to estimating the least  eigenvalue of a symmetric matrix. This task can be accomplished via the iterations of the power method. This will be treated in detail in the next section.
\end{remark}

In the remaining of this section we give a version of Carath\'eodory Theorem for SHM.  However, we need to
give an alternate form of the standard version.  According to the standard case of the theorem, if $p_\circ \in conv(\{v_1, \dots, v_n \}) \subset \mathbb{R}^m$, then $p_\circ$ lies in the convex hull of at most $m+1$ of the $v_i$'s.  Alternatively, the theorem can be restated as follows:  If $p_\circ \in conv(\{v_1, \dots, v_n \})$, there exists $x$ in the unit simplex $S_n= conv(\{e_1, \dots, e_n\})$
that is a convex combination of at most $m+1$ vertices of $S_n$. Furthermore, if $x = \sum_{i=1}^n x_ie_i$ is this convex combination, then $p_\circ= \sum_{i=1}^n x_iv_i$. In particular, $p_\circ$ is a convex combination of at most $m+1$ $v_i$'s.

\begin{thm} {\rm (Carath\'eodory Theorem for SHM)} \label{Carat} Given $\mathbf{S}=\{A_1, \dots A_m\} \subset   \mathbb{S}^n$ and $b \in \mathbb{R}^m$,
if $b \in SH(\mathbf{S})$, there  exists $X \in \mathbf{\Delta}_n$ with $p(X)=(A_1 \bullet X, \dots, A_m \bullet X)^T=b$, where $X$ can be written as a convex combination of at most $m+1$ extreme points of $\mathbf{\Delta}_n$.  More precisely, there exists $X_1, \dots, X_t$ extreme points of $\mathbf{\Delta}_n$, $t \leq m+1$,
such that $X \in conv(\{X_1, \dots, X_t\})$, i.e.
\begin{equation}  \label{eeq19}
X= \sum_{i=1}^t \alpha_i X_i, \quad \sum_{i=1}^t \alpha_i =1,  \quad \alpha_i \geq 0.
\end{equation}
\end{thm}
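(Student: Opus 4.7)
The plan is to reduce the statement to the classical Carath\'eodory theorem applied in $\mathbb{R}^m$, using the linearity of the map $p: \mathbf{\Delta}_n \to \mathbb{R}^m$ together with the fact that the extreme points of $\mathbf{\Delta}_n$ are exactly the rank-one matrices $vv^T$ with $\Vert v \Vert=1$ (this fact, stated in the discussion preceding the theorem and in Lemma \ref{lem1}/Corollary \ref{spectraplex}, may be invoked).

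First, I would record that $\mathbf{\Delta}_n = \mathrm{conv}(E)$, where $E=\{vv^T : v \in \mathbb{R}^n,\ \Vert v \Vert = 1\}$ is the set of extreme points of the spectraplex. Since $p$ is linear and $\mathbf{\Delta}_n$ is compact and convex, the image satisfies
\begin{equation}
SH(\mathbf{S}) = p(\mathbf{\Delta}_n) = p(\mathrm{conv}(E)) = \mathrm{conv}(p(E)),
\end{equation}
where $p(E) = \{p(vv^T) : \Vert v \Vert = 1\} \subset \mathbb{R}^m$. Thus $b \in SH(\mathbf{S})$ means $b$ lies in the convex hull, in $\mathbb{R}^m$, of the (possibly uncountable) set $p(E)$.

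Next, apply the classical Carath\'eodory theorem in $\mathbb{R}^m$ to the set $p(E)$: there exist $t \leq m+1$ points $p(X_1), \dots, p(X_t) \in p(E)$, with each $X_i \in E$ a rank-one extreme point of $\mathbf{\Delta}_n$, and nonnegative scalars $\alpha_1, \dots, \alpha_t$ with $\sum_{i=1}^t \alpha_i = 1$ such that
\begin{equation}
b = \sum_{i=1}^t \alpha_i\, p(X_i).
\end{equation}
Define $X = \sum_{i=1}^t \alpha_i X_i$. Because $\mathbf{\Delta}_n$ is convex and each $X_i \in \mathbf{\Delta}_n$, we have $X \in \mathbf{\Delta}_n$, and by construction $X$ is a convex combination of at most $m+1$ extreme points of $\mathbf{\Delta}_n$. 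Using linearity of $p$ one more time gives $p(X) = \sum_{i=1}^t \alpha_i p(X_i) = b$, as required.

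There is no real obstacle here once the two ingredients are isolated: the characterization of the extreme points of $\mathbf{\Delta}_n$ as unit rank-one projections, and the fact that the convex hull of an image under a linear map equals the image of the convex hull. Both are standard, and the theorem follows directly by applying the ordinary Carath\'eodory theorem in the target space $\mathbb{R}^m$ (not in $\mathbb{S}^n$, which is crucial since $\dim \mathbb{S}^n = n(n+1)/2$ would otherwise give a much weaker bound). The only mildly delicate point is that $p(E)$ may be an infinite set, but classical Carath\'eodory holds for arbitrary subsets of $\mathbb{R}^m$, so this causes no difficulty.
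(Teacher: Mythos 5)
Your proof is correct, and it is a genuinely different route from the one in the paper. The paper applies Krein--Milman to $SH(\mathbf{S})$ itself (noting it is compact and convex by Proposition~\ref{prop3}), then applies classical Carath\'eodory in $\mathbb{R}^m$ to write $b$ as a convex combination of at most $m+1$ \emph{extreme points} $p_1,\dots,p_t$ of $SH(\mathbf{S})$, and finally invokes the second half of Proposition~\ref{prop3} --- that every extreme point of $SH(\mathbf{S})$ is the image $p(X_i)$ of some extreme point $X_i$ of $\mathbf{\Delta}_n$ --- to lift each $p_i$ back into the spectraplex. You instead work with the \emph{generating set} $p(E)$ of $SH(\mathbf{S})$, where $E$ is the set of extreme points of $\mathbf{\Delta}_n$: you use $\mathbf{\Delta}_n = \mathrm{conv}(E)$ together with $p(\mathrm{conv}(E)) = \mathrm{conv}(p(E))$ and apply Carath\'eodory directly to $p(E)$. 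The benefit of your route is that it sidesteps the lifting lemma entirely --- you never need to know anything about the extreme points of $SH(\mathbf{S})$ or that they have extreme-point preimages, only that every element of $p(E)$ trivially has one. This makes the argument slightly leaner and more self-contained. (One could even avoid Krein--Milman for $\mathbf{\Delta}_n$ by directly citing the spectral decomposition $X = \sum_i \lambda_i u_i u_i^T$ with $\sum_i \lambda_i = \mathrm{Tr}(X) = 1$, as the paper itself does just after Corollary~\ref{spectraplex}.) Both routes give exactly the same $m+1$ bound, since both apply Carath\'eodory in the target space $\mathbb{R}^m$; you correctly flag that working in $\mathbb{S}^n$ would give the much weaker bound $\dim\mathbb{S}^n + 1$.
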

\begin{proof} By Proposition \ref{prop3}, $SH(\mathbf{S})$ is a compact convex subset of $\mathbb{R}^m$.  It is thus the convex hull of its extreme points (Krein-Milman Theorem).  Thus $b \in SH(\mathbf{S})$ implies,  by the standard Carath\'eodory theorem, $b$
can be written as a convex combination of $t \leq m+1$ vertices of $SH(\mathbf{S})$. Hence
\begin{equation}  \label{eeq20}
b = \sum_{i=1}^t \alpha_i p_i, \quad \sum_{i=1}^t \alpha_i =1, \quad \alpha_i \geq 0,
\end{equation}
where  for each $i=1, \dots, t$, $p_i$  is an extreme point of $SH(\mathbf{S})$.
By Proposition \ref{prop3}, for each $i=1, \dots, t$,  $p_i=p(X_i)$ for some extreme point  $X_i \in \mathbf{\Delta}_n$. We thus define $X \in \mathbf{\Delta}_n$ as
\begin{equation}  \label{eeq20x}
X = \sum_{i=1}^t \alpha_i X_i,
\end{equation}
where $\alpha_i$'s are as in (\ref{eeq20}). By linearity $p(X)=b$.
Hence the proof.
\end{proof}

In the next section we describe stronger versions of the theorem.
\section{Characterization of a Pivot in SHM and its Implications} \label{sec5}

\begin{definition} \label{defpivot}
We say  $V \in \mathbf{\Delta}_n$ is a $b$-pivot at $X'\in \mathbf{\Delta}_n$ if
$p(V)$ is $b$-{\it pivot} at $p(X')$. We say $W \in \mathbf{\Delta}_n$  is a $b$-{\it witness} at $X'\in \mathbf{\Delta}_n$ if $p(W)$ is a $b$-witness at $p(X')$.
\end{definition}

The main step in the development of the Triangle Algorithm for SHM is to compute for a given $X'\in \mathbf{\Delta}_n$ a $b$-pivot $V \in \mathbf{\Delta}_n$, or in the absence of a pivot, a $b$-witness $W \in \mathbf{\Delta}_n$. A pivot allows getting closer to $b$ while a witness induces a separation. We need to analyze the computation of a pivot.

From Definition \ref{defpivot} and Remark \ref{rem1} we have,
  $V \in \mathbf{\Delta}_n$ is a $b$-pivot at $X'$ if and only if
\begin{equation}  \label{eeq22}
\min \big \{(p(X') - b)^T p(X):  X \in \mathbf{\Delta}_n  \big \} \leq
\big (p(X') - b \big )^T p(V) \leq
\frac{1}{2} (\Vert p(X') \Vert^2 - \Vert b \Vert^2).
\end{equation}
Note that
\begin{equation}  \label{eeq23}
\big (p(X') -b \big )^Tp(V)= \sum_{i=1}^m \big (A_i \bullet X' - b_i \big ) (A_i \bullet V)= \big (\sum_{i=1}^m (A_i \bullet X' - b_i) A_i \big ) \bullet V.
\end{equation}
Thus we may state the following characterization of a pivot.

\begin{prop} \label{prop5} Given $X' \in \mathbf{\Delta}_n$, set $A=  \sum_{i=1}^m (A_i \bullet X' - b_i) A_i$.  Then $A \in \mathbb{S}^n$ and $V \in \mathbf{\Delta}_n$ is a pivot at $X'$ if
and only if
\begin{equation}  \label{eeq25}
\min \{A \bullet X:  X \in \mathbf{\Delta}_n\} \leq  A \bullet V \leq \frac{1}{2} \bigg (\Vert p(X') \Vert^2 - \Vert b \Vert^2 \bigg ).
\end{equation}
\qed
\end{prop}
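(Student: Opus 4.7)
The plan is to derive this as a direct specialization of the pivot characterization for General-CHM, namely Remark \ref{rem1} / equation (\ref{eeq1}), applied to the setup $p_\circ = b$, $p' = p(X')$, $v = p(V)$, with $C = SH(\mathbf{S})$. The key algebraic observation, already essentially isolated in (\ref{eeq23}), is that the linear functional $X \mapsto (p(X') - b)^T p(X)$ on $\mathbf{\Delta}_n$ can be rewritten as $A \bullet X$ for the specific symmetric matrix $A = \sum_{i=1}^m (A_i \bullet X' - b_i) A_i$.

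First I would check $A \in \mathbb{S}^n$: this is immediate since $A$ is a real linear combination of the symmetric matrices $A_1,\dots,A_m$, and $\mathbb{S}^n$ is a linear subspace. Next I would recall Definition \ref{defpivot}: $V \in \mathbf{\Delta}_n$ is a $b$-pivot at $X'$ precisely when $p(V)$ is a $p_\circ$-pivot at $p(X')$ in the General-CHM for $C = SH(\mathbf{S})$ with $p_\circ = b$. By Remark \ref{rem1} this is equivalent to
\[
\min\{(p(X') - b)^T y : y \in SH(\mathbf{S})\} \leq (p(X') - b)^T p(V) \leq \tfrac{1}{2}\bigl(\|p(X')\|^2 - \|b\|^2\bigr).
\]

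Then I would use the identity (\ref{eeq23}) twice: once to rewrite the middle term as $A \bullet V$, and once to rewrite the minimum on the left. For the latter, since every $y \in SH(\mathbf{S})$ is of the form $p(X)$ for some $X \in \mathbf{\Delta}_n$ (by definition of $SH(\mathbf{S})$, or by Proposition \ref{prop3}), we have
\[
\min\{(p(X') - b)^T y : y \in SH(\mathbf{S})\} = \min\{(p(X') - b)^T p(X) : X \in \mathbf{\Delta}_n\} = \min\{A \bullet X : X \in \mathbf{\Delta}_n\},
\]
where the second equality is exactly the calculation in (\ref{eeq23}) applied with $X$ in place of $V$. Substituting these two rewritings into the pivot inequality yields (\ref{eeq25}), completing the proof.

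There is no real obstacle here; the statement is essentially a syntactic translation of the General-CHM pivot condition into the spectral setting, with the sole content being the rewriting of the inner product $(p(X') - b)^T p(X)$ as a Frobenius inner product $A \bullet X$ with a single symmetric matrix $A$. The only thing to be mildly careful about is noting that the minimum over $SH(\mathbf{S})$ can legitimately be replaced by a minimum over $\mathbf{\Delta}_n$ via the surjection $p:\mathbf{\Delta}_n \to SH(\mathbf{S})$; this is guaranteed by Proposition \ref{prop3} and by linearity of $p$.
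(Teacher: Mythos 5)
Your proof is correct and follows essentially the same route as the paper: specialize the General-CHM pivot characterization of Remark \ref{rem1} to $C = SH(\mathbf{S})$, $p_\circ = b$, $p' = p(X')$, $v = p(V)$, and then use the algebraic identity (\ref{eeq23}) to rewrite both the middle term and the minimum as Frobenius inner products with $A$. Your additional remark that the minimum over $SH(\mathbf{S})$ may be replaced by a minimum over $\mathbf{\Delta}_n$ via the surjection $p$ is a small but welcome clarification of a step the paper leaves implicit.
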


The following characterizes the optimization in (\ref{eeq25}).

\begin{lemma}  \label{lem1} Given arbitrary $A \in \mathbb{S}^n$,
let $\lambda_{\rm min}(A)$ be the minimum eigenvalue of $A$ and $u_*$  a corresponding eigenvector.  Then
\begin{equation} \label{lem1eq1}
\min \{A \bullet X :  X \in \mathbf{\Delta}_n \}=\lambda_{\rm min}(A)=A \bullet u_*u_*^T.
\end{equation}

\end{lemma}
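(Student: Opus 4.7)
The plan is to combine the spectral decomposition of an arbitrary $X \in \mathbf{\Delta}_n$ with the Rayleigh quotient characterization of $\lambda_{\min}(A)$. First, I would spectrally decompose any candidate $X$ as $X = \sum_{i=1}^n \mu_i q_i q_i^T$, where $\{q_i\}$ is an orthonormal eigenbasis and $\mu_i \geq 0$ because $X \succeq 0$. The constraint $Tr(X) = 1$ translates into $\sum_{i=1}^n \mu_i = 1$, so the eigenvalues of $X$ form a probability distribution on the unit sphere (supported on the $q_i$'s).

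Next, using linearity of the Frobenius inner product and the identity $A \bullet q q^T = q^T A q$, I would write
\begin{equation}
A \bullet X = \sum_{i=1}^n \mu_i (q_i^T A q_i).
\end{equation}
The Rayleigh quotient bound gives $q_i^T A q_i \geq \lambda_{\min}(A)$ for every unit vector $q_i$, so
\begin{equation}
A \bullet X \;\geq\; \lambda_{\min}(A) \sum_{i=1}^n \mu_i \;=\; \lambda_{\min}(A),
\end{equation}
proving the lower bound for every $X \in \mathbf{\Delta}_n$.

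For the matching upper bound, I would exhibit the rank-one matrix $X_* = u_* u_*^T$, where $u_*$ is a unit eigenvector associated with $\lambda_{\min}(A)$. Since $Tr(X_*) = u_*^T u_* = 1$ and $X_* \succeq 0$, we have $X_* \in \mathbf{\Delta}_n$, and a direct computation yields $A \bullet X_* = u_*^T A u_* = \lambda_{\min}(A)$. Combining this with the preceding inequality shows that the infimum is attained and equals $\lambda_{\min}(A) = A \bullet u_* u_*^T$, as claimed. There is no real obstacle here beyond invoking the spectral theorem; the only point worth flagging is that compactness of $\mathbf{\Delta}_n$ (established in Proposition \ref{prop3}) guarantees the minimum is achieved, but the explicit minimizer $X_*$ makes even this observation unnecessary.
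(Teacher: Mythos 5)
Your proof is correct and complete, but it follows a genuinely different path from the paper's. You spectrally decompose the \emph{variable} $X$ as $\sum_i \mu_i q_i q_i^T$ with $\mu_i \geq 0$, $\sum_i \mu_i = 1$, rewrite $A \bullet X = \sum_i \mu_i\, q_i^T A q_i$, and invoke the Rayleigh-quotient bound $q^T A q \geq \lambda_{\min}(A)$ for each unit $q_i$; the matching upper bound comes from the explicit feasible point $u_* u_*^T$. The paper, by contrast, spectrally decomposes $A$ (not $X$) and establishes the result through a self-contained SDP weak-duality argument: it introduces the dual $\max\{y : yI_n + S = A,\ S \succeq 0\}$, shows $y \leq A \bullet X$ for every primal--dual feasible pair since $I_n \bullet X = 1$ and $S \bullet X \geq 0$, and then argues that the dual optimum is $\lambda_{\min}(A)$, attained together with the primal at $X_* = u_*u_*^T$. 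In fact the paper explicitly notes in its first sentence that a proof like yours is possible (``one can show that the matrix $A$ \dots can be replaced with the diagonal matrix $\Lambda$ of its eigenvalues'') before deliberately choosing the duality route to illustrate a special case of SDP duality in a self-contained way. Your Rayleigh-quotient argument is the more elementary and shorter of the two, and it also makes the rank-one structure of the minimizer visibly emerge from the constraint $\sum_i \mu_i = 1$, which dovetails nicely with Corollary~\ref{spectraplex}; the paper's argument is longer but serves a pedagogical purpose in a paper about semidefinite programming. Both are valid proofs of the lemma.
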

\begin{proof}
To prove the minimum is $\lambda_{\rm min}(A)$ one can show that the matrix $A$ in the optimization problem can be replaced with the diagonal matrix $\Lambda$ of its eigenvalues. From this it is easy to give the proof. However, we give an alternate proof using a self-contained duality result for this very special case of SDP. Note  $\mathbf{\Delta}_n=\{X \in \mathbb{S}^n:   Tr(X)=I_n \bullet X=1, \quad X \succeq 0 \}$, where $I_n$ denotes the $n \times n$ identity matrix. Consider the optimization:
\begin{equation} \label{eqsdd}
\max \{y :  yI_n + S=A,  \quad S \succeq 0\}.
\end{equation}
Let $X$ be any feasible point of (\ref{lem1eq1})
and $(y,S)$  any feasible point. Then from (\ref{eqsdd}) we get
\begin{equation}
y I_n \bullet X + S \bullet X= A \bullet X.
\end{equation}
But $I_n \bullet X=1$ and since  $S \bullet X \geq 0$,  we get $y \leq A \bullet X$. On the other hand, consider the spectral decomposition $A=U\Lambda U^T$, with $\Lambda={\rm diag}(\lambda)$, diagonal matrix of eigenvalues.
Multiplying the equation $yI_n + X=A$ by $U^T$ on the left and $U$ on the right we get, $yI_n+ D= \Lambda$, where $D=U^TXU$ must necessarily be a PSD diagonal matrix. But this implies $y \leq  \lambda_i$ for all $i=1, \dots n$. On the other hand,  the maximum value of $y$ is $\lambda_{\rm min}(A)$, occurring at $X_*=u_*u_*^T$ which coincides  with $A \bullet X_*$.  Hence the proof.
\end{proof}

From Lemma \ref{lem1}, given  $A \in \mathbb{S}^n$, the minimum of $A \bullet X$ over $\mathbf{\Delta}_n$ occurs at a matrix of rank-one. Also, given $v \in \mathbb{R}^n$ of unit norm, the minimum of $vv^T \bullet X$ over $\mathbf{\Delta}_n$ occurs at $vv^T$.  Thus we have

\begin{cor} \label{spectraplex} The extreme points of $\mathbf{\Delta}_n$ are matrices of the form $vv^T$, where $v \in \mathbb{R}^n$, $ \Vert v \Vert =1$.  \qed
\end{cor}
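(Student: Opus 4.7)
The plan is to prove both inclusions separately: (a) every matrix of the form $vv^T$ with $\Vert v\Vert = 1$ is an extreme point of $\mathbf{\Delta}_n$, and (b) every extreme point of $\mathbf{\Delta}_n$ has this form.

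For direction (a), I would invoke Lemma \ref{lem1} with the test matrix $A = I_n - vv^T$. This $A$ is positive semidefinite with eigenvalues $0$ (along $v$) and $1$ (on $v^\perp$), so by Lemma \ref{lem1} the minimum of $A \bullet X$ over $\mathbf{\Delta}_n$ equals $0$ and is attained at $vv^T$. The key auxiliary step, which I expect to be the main obstacle, is showing that this minimum is attained \emph{only} at $vv^T$. Rewriting $A \bullet X = \mathrm{Tr}(X) - v^T X v = 1 - v^T X v$, the vanishing $A \bullet X = 0$ forces $v^T X v = 1$. Combined with $\mathrm{Tr}(X) = 1$ and the spectral decomposition $X = \sum_i \lambda_i u_i u_i^T$, the pointwise bound $(u_i^T v)^2 \leq 1$ together with $\sum_i \lambda_i (u_i^T v)^2 = \sum_i \lambda_i = 1$ and $\lambda_i \geq 0$ forces $u_i = \pm v$ whenever $\lambda_i > 0$, so $X = vv^T$. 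With this uniqueness in hand, any representation $vv^T = \alpha Y + (1-\alpha) Z$ with $Y, Z \in \mathbf{\Delta}_n$ and $\alpha \in (0,1)$ yields $0 = A \bullet vv^T = \alpha (A \bullet Y) + (1-\alpha)(A \bullet Z)$; nonnegativity of both terms gives $A \bullet Y = A \bullet Z = 0$, hence $Y = Z = vv^T$, proving $vv^T$ is extreme.

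For direction (b), I would take the spectral decomposition of an arbitrary extreme point $X \in \mathbf{\Delta}_n$. Writing $X = \sum_i \lambda_i u_i u_i^T$ with $u_i$ orthonormal, $\lambda_i \geq 0$, and $\sum_i \lambda_i = 1$, each $u_i u_i^T$ lies in $\mathbf{\Delta}_n$, so this exhibits $X$ as a convex combination of elements of $\mathbf{\Delta}_n$. Extremeness then forces exactly one $\lambda_i$ to equal $1$ and the rest to vanish, yielding $X = u_i u_i^T$ for the corresponding unit eigenvector, as required.
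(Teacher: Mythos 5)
Your argument is correct and follows the same ideas the paper sketches for Corollary \ref{spectraplex}: Lemma \ref{lem1} (via an exposing linear functional) to show each $vv^T$ is extreme, and the spectral decomposition to show no other matrix can be. You supply the uniqueness-of-minimizer detail the paper's one-line sketch omits, and by choosing $A = I_n - vv^T$ you sidestep a small slip in that sketch, which asserts $vv^T$ \emph{minimizes} $vv^T \bullet X$ over $\mathbf{\Delta}_n$ when in fact it maximizes it (the minimum, $0$, is attained at $ww^T$ for any unit $w \perp v$).
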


Given $X \in \mathbf{\Delta}_n$, on the one hand from spectral decomposition  $X=\sum_{i=1}^n \lambda_i u_i^Tu_i$ with $\lambda_i$, $u_i$ its eigenvalue-eigenvectors.  On the other hand, $Tr(X)=1$.  From this,  Corollary \ref{spectraplex} together with Theorem \ref{Carat} we have

\begin{thm} {\rm (Stronger Carath\'eodory Theorem for SHM)} \label{strongCara}Given $\mathbf{S}=\{A_1, \dots A_m\} \subset   \mathbb{S}^n$ and $b \in \mathbb{R}^m$,
if $b \in SH(\mathbf{S})$, there  exists $X \in \mathbf{\Delta}_n$ with $p(X)=(A_1 \bullet X, \dots, A_m \bullet X)^T=b$, where $X$ can be written as a convex combination of at most $ \min \{m+1, n \}$ rank-one matrices of the form  $vv^T$, where $\Vert v \Vert =1$. \qed
\end{thm}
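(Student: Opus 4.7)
The plan is to combine two independent decomposition results and take whichever produces the smaller number of rank-one terms.

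First, I invoke Theorem \ref{Carat} to obtain some $X^{(1)} \in \mathbf{\Delta}_n$ with $p(X^{(1)}) = b$ that is already expressible as a convex combination of at most $m+1$ extreme points of $\mathbf{\Delta}_n$. By Corollary \ref{spectraplex}, every such extreme point has the form $vv^T$ with $v \in \mathbb{R}^n$ of unit norm, so this immediately yields a representation of $X^{(1)}$ as a convex combination of at most $m+1$ rank-one matrices of the required shape.

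Second, by Proposition \ref{prop3} I can fix any $X^{(2)} \in \mathbf{\Delta}_n$ with $p(X^{(2)}) = b$ and apply the spectral theorem: since $X^{(2)}$ is real symmetric there exist an orthonormal basis $u_1, \dots, u_n$ of $\mathbb{R}^n$ and real numbers $\lambda_1, \dots, \lambda_n$ with $X^{(2)} = \sum_{i=1}^n \lambda_i u_i u_i^T$. Positive semidefiniteness forces $\lambda_i \geq 0$, and $Tr(X^{(2)}) = 1$ forces $\sum_{i=1}^n \lambda_i = 1$, so this exhibits $X^{(2)}$ as a convex combination of the $n$ rank-one matrices $u_i u_i^T$, each with $\Vert u_i \Vert = 1$. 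By linearity of $p$, the identity $p(X^{(2)}) = b$ gives $\sum_i \lambda_i p(u_i u_i^T) = b$ automatically.

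Selecting $X^{(1)}$ when $m+1 \leq n$ and $X^{(2)}$ when $n < m+1$ yields an $X \in \mathbf{\Delta}_n$ with $p(X) = b$ admitting a rank-one convex decomposition with at most $\min\{m+1, n\}$ terms, which is the statement. There is no real obstacle: the work has been done by Theorem \ref{Carat} (the $m+1$ bound), by the spectral theorem (the $n$ bound), and by Corollary \ref{spectraplex} (which translates ``extreme point of $\mathbf{\Delta}_n$'' into ``rank-one matrix $vv^T$ with $\Vert v \Vert = 1$''). The only subtlety worth flagging is that $X^{(1)}$ and $X^{(2)}$ may be different, so the conclusion should be read as the existence of \emph{some} $X$ admitting a $\leq \min\{m+1, n\}$-term rank-one convex decomposition, not as a statement about an arbitrary preimage of $b$.
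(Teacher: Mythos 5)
Your proposal is correct and follows essentially the same route as the paper: the $m+1$ bound comes from Theorem~\ref{Carat} combined with Corollary~\ref{spectraplex}, the $n$ bound comes from the spectral decomposition of any preimage in $\mathbf{\Delta}_n$ together with $Tr(X)=1$, and one takes the smaller count. Your explicit remark that the two witnesses $X^{(1)}$ and $X^{(2)}$ need not coincide is a useful clarification of a point the paper leaves implicit.
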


Theorem  \ref{strongCara} supports a strategy in the design of a  Triangle Algorithm for SHM that makes it run more as if it is a Triangle Algorithm for CHM, hence making it less dependent on the use of power method. This will be treated in detail later.

In fact we can state even a stronger Carath\'eodory-type theorem for SHM.  First, we state the following characterization theorem for SDP feasibility.

\begin{thm} \label{Barv} {\rm (Barvinok \cite{Barvinok})} Given $A_i \in \mathbb{S}^n$, $b_i \in \mathbb{R}$,  if there is a solution $X \succeq 0$ such that $A_i \bullet X=b_i$, $i=1, \dots, m$, then there is a solution $X$ whose rank $r$ satisfies
\begin{equation} r \leq \bigg [ \frac{\sqrt{8m +1} -1}{2} \bigg ],
\end{equation}
where $[ \cdot ]$ means the integer part. \qed
\end{thm}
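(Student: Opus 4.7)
The plan is a rank-reduction argument: starting from any feasible $X\succeq 0$ whose rank $r$ is too large, produce a feasible solution of rank strictly less than $r$; iterating (or, equivalently, arguing by contradiction from a minimum-rank feasible solution) forces $r$ below the stated bound. The bound itself is the largest integer $r$ satisfying $r(r+1)/2\le m$, which is exactly the threshold at which a certain dimension count stops producing a free direction.

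Let $X_*$ be a feasible solution of minimum rank $r$, and factor $X_*=LL^T$ with $L\in\mathbb{R}^{n\times r}$ of full column rank. For any $M\in\mathbb{S}^r$, set $X(M)=LML^T$; then $X(M)\succeq 0$ whenever $M\succeq 0$, and the equality constraints become
\begin{equation*}
A_i\bullet X(M)=Tr(L^TA_iLM)=(L^TA_iL)\bullet M=b_i,\qquad i=1,\dots,m,
\end{equation*}
with $M=I_r$ recovering $X_*$. Thus the feasible $M$'s form an affine slice of $\mathbb{S}_+^r$, cut by at most $m$ linear equations in the $r(r+1)/2$-dimensional space $\mathbb{S}^r$. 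If $r(r+1)/2>m$, the linear map $M\mapsto((L^TA_iL)\bullet M)_{i=1}^m$ from $\mathbb{S}^r$ to $\mathbb{R}^m$ has a nonzero kernel, so we may pick $\Delta\in\mathbb{S}^r$ with $\Delta\ne 0$ and $(L^TA_iL)\bullet\Delta=0$ for every $i$. Then $X(I_r+t\Delta)$ meets every equality constraint for all $t\in\mathbb{R}$, and choosing $t_*$ of smallest absolute value such that $I_r+t_*\Delta$ is singular (and still PSD) yields a feasible $X(I_r+t_*\Delta)=L(I_r+t_*\Delta)L^T$ of rank strictly less than $r$, contradicting minimality.

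Hence $r(r+1)/2\le m$, giving $r\le[(\sqrt{8m+1}-1)/2]$ after solving the quadratic. The main (and only genuinely subtle) obstacle is justifying the choice of $t_*$: since $\Delta\ne 0$ is symmetric it has at least one nonzero real eigenvalue $\lambda$, so $I_r+t\Delta$ becomes singular at $t=-1/\lambda$; taking $t_*$ with the smallest $|t|$ for which this happens on the side keeping the matrix positive semidefinite both preserves feasibility and strictly reduces the rank. No machinery beyond elementary linear algebra and the PSD factorization is required, and the existence of a minimum-rank feasible solution is automatic because the set of attainable ranks is a nonempty subset of $\{0,1,\dots,n\}$.
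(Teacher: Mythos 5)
The paper does not prove this theorem; it cites the result from Barvinok \cite{Barvinok} and closes the statement with a \qed, so there is no in-paper proof to compare against. Your rank-reduction argument is correct and self-contained, and it is the standard elementary proof of this bound (the style is closer to Pataki's later treatment; Barvinok's original paper argued via topological properties of quadratic maps). The key steps all check: writing a minimum-rank feasible $X_*=LL^T$ with $L\in\mathbb{R}^{n\times r}$ of full column rank; observing that $M\mapsto LML^T$ sends $\mathbb{S}^r_+$ into the PSD cone and that the constraints pull back to $(L^TA_iL)\bullet M=b_i$ with $M=I_r$ recovering $X_*$; using $\dim\mathbb{S}^r=r(r+1)/2>m$ to extract a nonzero $\Delta$ in the kernel of the pulled-back constraint map; and perturbing along $I_r+t\Delta$. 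The $t_*$ step is sound: the set $\{t:I_r+t\Delta\succeq 0\}$ is a closed interval containing $0$ in its interior, and since $\Delta\neq 0$ has a nonzero eigenvalue at least one endpoint is finite, at which point some eigenvalue $1+t_*\mu_j$ hits zero. One tiny point worth making explicit (though it is immediate) is that $\mathrm{rank}\,L(I_r+t_*\Delta)L^T=\mathrm{rank}(I_r+t_*\Delta)<r$ precisely because $L$ has full column rank, which is what delivers the contradiction. Solving $r(r+1)/2\le m$ for integer $r$ gives exactly $r\le[(\sqrt{8m+1}-1)/2]$. In short: correct proof, and arguably a worthwhile addition since the paper leaves the theorem as a bare citation.
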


We may state the following stronger version of Theorem \ref{strongCara}:

\begin{thm} \label{strongerCara} If $b \in SH(\mathbf{S})$, there  exists $X \in \mathbf{\Delta}_n$ with $p(X)=(A_1 \bullet X, \dots, A_m \bullet X)^T=b$, where $X$ can be written as a convex combination of at most
\begin{equation}
r \leq \bigg [ \frac{\sqrt{8m +9} -1}{2} \bigg ],
\end{equation}
rank-one matrices of the form  $vv^T$, where $\Vert v \Vert =1$.
\end{thm}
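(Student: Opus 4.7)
The plan is to recognize that membership $b \in SH(\mathbf{S})$ is itself an instance of SDP feasibility, and then apply Barvinok's bound (Theorem \ref{Barv}) with the correct number of linear constraints. The spectraplex condition $X \in \mathbf{\Delta}_n$ already encodes the equation $\mathrm{Tr}(X) = I_n \bullet X = 1$, so $b \in SH(\mathbf{S})$ is equivalent to the existence of $X \succeq 0$ satisfying the $m+1$ linear equations
\begin{equation}
A_i \bullet X = b_i,\ i=1,\dots,m, \qquad I_n \bullet X = 1.
\end{equation}
This is precisely the hypothesis of Theorem \ref{Barv} with $m$ replaced by $m+1$.

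Applying Barvinok's theorem to this augmented system yields a feasible $X \in \mathbf{\Delta}_n$ of rank
\begin{equation}
r \leq \left[ \frac{\sqrt{8(m+1)+1} - 1}{2} \right] = \left[ \frac{\sqrt{8m+9}-1}{2} \right].
\end{equation}
Next, take the spectral decomposition $X = \sum_{i=1}^{r} \lambda_i u_i u_i^T$ with $\lambda_i > 0$ and $\Vert u_i \Vert = 1$. Because $\mathrm{Tr}(X) = \sum_{i=1}^r \lambda_i = 1$, this expresses $X$ as a convex combination of the rank-one matrices $u_i u_i^T$, each an extreme point of $\mathbf{\Delta}_n$ by Corollary \ref{spectraplex}. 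By linearity of $p$, the identity $p(X) = b$ is preserved. This gives the claimed representation.

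There is no substantive obstacle: everything reduces to accounting correctly for the trace equation as an extra constraint (this is what explains the $+9$ rather than $+1$ under the square root). The only thing to verify is the clean transition from ``rank-$r$ PSD matrix of unit trace'' to ``convex combination of at most $r$ unit-norm rank-one projections,'' which is immediate from the spectral theorem. Thus the proof is essentially a two-line application of Theorem \ref{Barv} combined with Corollary \ref{spectraplex}.
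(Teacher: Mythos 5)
Your proof is correct and follows essentially the same route as the paper: apply Barvinok's theorem with $m+1$ constraints (the $m$ equations $A_i \bullet X = b_i$ together with the trace constraint $I_n \bullet X = 1$), then use the spectral decomposition of the resulting low-rank $X$ together with $\mathrm{Tr}(X)=1$ to get the convex combination of unit-norm rank-one projections. You spell out more explicitly than the paper why the replacement of $m$ by $m+1$ is the right count, but the argument is the same.
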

\begin{proof} The bound on the rank is simply from replacing $m$ in Theorem \ref{Barv} with $m+1$. The fact that $X$ is a convex combination of $r$ rank-one matrices follows from two observations: Firstly,  $X$ is  PSD thus, it can be  written as $\sum_{i=1}^n \lambda_i v_iv_i^T$, where $\lambda_i$-$v_i$ are its eigenvalue-eigenvector decomposition. Secondly,  $\sum_{i=1}^n \lambda_i = \sum_{i=1, \lambda_i >0}^n =Tr(X)=1$.
\end{proof}

According to Theorem \ref{strongerCara} if SHM is feasible, a solution  $X \in \mathbf{\Delta}_n$ of rank $O(\sqrt{m})$ can be guaranteed. In the worst-case $m=O(n^2)$ and thus the bound  in Theorem \ref{strongerCara} is not stronger than that of Theorem \ref{strongCara}, however  for a large range of values of $m$, in particular when $m  \leq n$, Theorem \ref{strongerCara} gives a stronger bound. The drawback however is that computing a minimum rank solution to SDP feasibility is NP-Hard. This problem is analogous to the problem of computing a minimum-support solution to a linear system, known to be NP-hard, see \cite{Garey}.  Nevertheless, there are heuristic approaches for computing a low rank solution to an SDP, see Lemon et al. \cite{Lemon} and references therein.

Relevant properties of  pivot-witness for SHM  are summarized in the following.

\begin{thm} \label{thmxx} Consider SHM.  Given $X' \in \mathbf{\Delta}_n$, set
\begin{equation}  \label{eeq24}
A=  \sum_{i=1}^m (A_i \bullet X' - b_i) A_i.
\end{equation}

If there is a $b$-pivot $V \in \mathbf{\Delta}_n$, then  there is a rank-one $b$-pivot, $V = vv^T$,  $\Vert v \Vert=1$ satisfying
\begin{equation}  \label{eeq25x}
\lambda_{\rm min}(A) \leq  A \bullet V \leq \frac{1}{2} \big (\Vert p(X') \Vert^2 - \Vert b \Vert^2 \big ).
\end{equation}
In particular, in order to compute a pivot $V \in \mathbf{\Delta}_n$ at a given iterate $X' \in \mathbf{\Delta}_n$ it suffices to estimate an eigenvector corresponding to the least eigenvalue of $A$.  Furthermore, if there is a $b$-witness, then there is a rank-one $b$-witness, $W=ww^T$, with $\Vert w \Vert=1$.
\end{thm}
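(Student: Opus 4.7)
The plan is to reduce both halves to Lemma~\ref{lem1}, capitalizing on the fact that any linear functional $X \mapsto A\bullet X$ on the spectraplex $\mathbf{\Delta}_n$ attains its extrema at rank-one extreme points (Corollary~\ref{spectraplex}).

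For the pivot part, I combine Proposition~\ref{prop5} with Lemma~\ref{lem1} directly. Proposition~\ref{prop5} characterizes a $b$-pivot $V$ at $X'$ by the sandwich $\min\{A\bullet X : X \in \mathbf{\Delta}_n\} \le A \bullet V \le \tfrac{1}{2}(\Vert p(X')\Vert^2 - \Vert b\Vert^2)$, and Lemma~\ref{lem1} identifies the left endpoint as $\lambda_{\min}(A)$, attained at the rank-one matrix $u_\ast u_\ast^T$ for any unit eigenvector $u_\ast$ associated with $\lambda_{\min}(A)$. Consequently, if any pivot at all exists, the upper bound necessarily dominates $\lambda_{\min}(A)$, and so $V_\ast = u_\ast u_\ast^T$ itself satisfies both inequalities, producing a rank-one pivot with $A \bullet V_\ast = \lambda_{\min}(A)$ as in (\ref{eeq25x}). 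The computational assertion follows at no extra cost: a rank-one pivot is available as soon as one can extract a unit eigenvector of $A$ for $\lambda_{\min}(A)$, which is the natural target of the power method applied to a shifted copy $\mu I - A$.

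For the rank-one witness, I run the analogous argument on the max-side of Lemma~\ref{lem1}. Squaring and rearranging the witness inequality $\Vert p(W) - p(X)\Vert < \Vert b - p(X)\Vert$ (for all $X \in \mathbf{\Delta}_n$) recasts the witness property of $W$ as $\lambda_{\max}(M_W) < \tfrac{1}{2}(\Vert b\Vert^2 - \Vert p(W)\Vert^2)$ with $M_W := \sum_i (b_i - A_i\bullet W)A_i$, and the maximum over $\mathbf{\Delta}_n$ is attained at $ww^T$ for a unit eigenvector $w$ of $M_W$ at $\lambda_{\max}(M_W)$. Assuming a witness exists, i.e.\ $b \notin SH(\mathbf{S})$, let $p^\ast$ be the closest point of $SH(\mathbf{S})$ to $b$ with preimage $X^\ast \in \mathbf{\Delta}_n$ (Proposition~\ref{prop3}); the direction $c = b - p^\ast$ defines a supporting hyperplane of $SH(\mathbf{S})$ at $p^\ast$, and a unit eigenvector $w$ realizing $\lambda_{\max}(\sum_i c_i A_i) = c^T p^\ast$ places $p(ww^T)$ on that supporting hyperplane. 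The strict separation of $b$ from $SH(\mathbf{S})$ then promotes $W = ww^T$ to a rank-one witness.

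The main obstacle is the witness direction. The pivot claim is essentially a one-line corollary of Lemma~\ref{lem1}, but being a witness is a strict inequality against \emph{every} element of $\mathbf{\Delta}_n$, not merely the fulfillment of a sandwich on a linear functional. The delicate point is verifying the strict slack $\lambda_{\max}(M_W) < \tfrac{1}{2}(\Vert b\Vert^2 - \Vert p(W)\Vert^2)$ for the chosen rank-one $W$---not merely that $p(W)$ lies on the supporting hyperplane---which will rely on the specific geometry of $SH(\mathbf{S})$ together with the supporting hyperplane at $p^\ast$, and which parallels the use of Proposition~\ref{prop3} to promote an extreme point of $SH(\mathbf{S})$ to a rank-one preimage in $\mathbf{\Delta}_n$.
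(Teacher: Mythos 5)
Your pivot argument is correct and is precisely the paper's own proof: Proposition~\ref{prop5} gives the sandwich, Lemma~\ref{lem1} identifies its left endpoint as $\lambda_{\min}(A)$ attained at the rank-one matrix $u_*u_*^T$, and nonemptiness of the interval makes $u_*u_*^T$ a rank-one pivot.

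For the witness half you are right to be suspicious, and the gap you flag cannot be closed: the claim is false in general. Take $n=2$, $m=3$, $A_1=\mathrm{diag}(1,-1)$, $A_2$ the symmetric matrix with zero diagonal and off-diagonal entries equal to $1$, $A_3=I_2$, and $b=(0,0,2)^T$. Writing $X\in\mathbf{\Delta}_2$ with $x_{11}=(1+u)/2$, $x_{22}=(1-u)/2$, $x_{12}=v/2$, positive semidefiniteness becomes $u^2+v^2\le 1$ and $p(X)=(u,v,1)^T$, so $SH(\mathbf{S})$ is the closed unit disk in the plane $z=1$. The rank-one matrices $ww^T$ with $\Vert w\Vert=1$ map exactly onto the boundary circle $u^2+v^2=1$, whereas a short computation shows $(a,c,1)^T$ is a $b$-witness iff $a^2+c^2+2\sqrt{a^2+c^2}<1$, i.e.\ $\sqrt{a^2+c^2}<\sqrt{2}-1$. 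Thus witnesses exist (for instance $W=I_2/2$, $p(W)=(0,0,1)^T$) but no rank-one $W$ is a witness; your proposed construction (here $c=b-p^*=(0,0,1)^T$, so $\sum_i c_i A_i=I_2$ and $w$ is an arbitrary unit vector) lands on the boundary circle, outside the witness region. The paper's one-line proof of this part, ``take $w$ an eigenvector of the minimum eigenvalue,'' suffers the same defect and does not survive this example: the mechanism that makes the pivot claim a one-liner (the relevant extremum being attained at a rank-one matrix at the \emph{endpoint} of an interval already known to be nonempty) has no analogue for the witness claim. As your own rewriting $\lambda_{\max}(M_W)<\tfrac{1}{2}(\Vert b\Vert^2-\Vert p(W)\Vert^2)$ with $M_W=\sum_i(b_i-A_i\bullet W)A_i$ makes explicit, the witness property is a strict self-referential inequality in which $M_W$ moves with $W$, and Lemma~\ref{lem1} alone does not force a rank-one $W$ into the region where it holds.
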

\begin{proof} From Lemma \ref{lem1} we only need to take $v$ (or $w$) to be an eigenvector corresponding to the minimum eigenvalue.
\end{proof}

\section{A Triangle Algorithm for SHM} \label{sec6}

To describe the Triangle Algorithm for testing if a given $b \in \mathbb{R}^m$ lies in $SH(\mathbf{S})$, we first compute an upper  bound on the quantity $R$:
\begin{equation} \label{eqR}
R = \max \{\Vert b - p(X) \Vert : X \in \mathbf{\Delta}_n\} \leq
\Vert b \Vert +  \max \{\Vert p(X) \Vert : X \in \mathbf{\Delta}_n \}.
\end{equation}
Although to run the Triangle Algorithm we do not need such a bound (we simply stop if desirable approximation is achieved), we derive one
for the purpose of complexity analysis.  From (\ref{eqR}), to get a rough bound, it suffices to bound the diameter of $SH(\mathbf{S})$.

\begin{prop} \label{prop10} Given $A \in \mathbb{S}^n$, let $\sigma_{\max(A)}$ be its maximum absolute eigenvalue. Then
\begin{equation}  \label{eeq27}
\max \{|A \bullet X|: X \in \mathbf{\Delta}_n \} = \Vert A \Vert_2 =\sigma_{\max(A)} \leq \Vert A \Vert_F.
\end{equation}
In particular,
\begin{equation} \label{eeq27a}
\max \{\Vert p(X) \Vert : X \in \mathbf{\Delta}_n \} \leq \sum_{i=1}^m \Vert A_i \Vert_2
\end{equation}
\end{prop}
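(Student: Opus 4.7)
The proof is essentially a direct consequence of Lemma \ref{lem1}, applied to both $A$ and $-A$. My plan is to first establish the equality in (\ref{eeq27}), then use it to derive (\ref{eeq27a}) via a standard norm inequality.

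For the first part, I would start by invoking Lemma \ref{lem1} to get $\min\{A \bullet X : X \in \mathbf{\Delta}_n\} = \lambda_{\min}(A)$. Applying the same lemma to the matrix $-A \in \mathbb{S}^n$ yields $\min\{(-A) \bullet X : X \in \mathbf{\Delta}_n\} = \lambda_{\min}(-A) = -\lambda_{\max}(A)$, and hence $\max\{A \bullet X : X \in \mathbf{\Delta}_n\} = \lambda_{\max}(A)$. Consequently,
\begin{equation}
\max\{|A \bullet X| : X \in \mathbf{\Delta}_n\} = \max\{|\lambda_{\min}(A)|, |\lambda_{\max}(A)|\} = \sigma_{\max}(A).
\end{equation}
The identification with the operator norm $\|A\|_2$ is a standard fact about symmetric matrices.

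For the inequality $\sigma_{\max}(A) \leq \|A\|_F$, I would use the spectral decomposition $A = U \Lambda U^T$ with eigenvalues $\lambda_1, \dots, \lambda_n$, which gives $\|A\|_F^2 = Tr(A^2) = \sum_{i=1}^n \lambda_i^2 \geq \max_i \lambda_i^2 = \sigma_{\max}(A)^2$; taking square roots finishes this piece.

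Finally, for (\ref{eeq27a}), I would bound the Euclidean norm by the $\ell_1$ norm and apply the first part coordinate-wise: for any $X \in \mathbf{\Delta}_n$,
\begin{equation}
\Vert p(X) \Vert = \sqrt{\sum_{i=1}^m (A_i \bullet X)^2} \leq \sum_{i=1}^m |A_i \bullet X| \leq \sum_{i=1}^m \Vert A_i \Vert_2.
\end{equation}
There is no genuine obstacle here; the whole proposition is a short corollary of Lemma \ref{lem1} together with the elementary relation $\|v\|_2 \le \|v\|_1$ on $\mathbb{R}^m$.
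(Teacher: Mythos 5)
Your proof is correct and follows essentially the same route as the paper, which simply cites Lemma~\ref{lem1} for the equality, calls the bound $\sigma_{\max}(A)\le\|A\|_F$ well known, and calls~(\ref{eeq27a}) obvious. You have just unpacked the steps the paper leaves implicit (applying Lemma~\ref{lem1} to both $A$ and $-A$, the spectral identity $\|A\|_F^2=\sum\lambda_i^2$, and $\|v\|_2\le\|v\|_1$), all of which are valid.
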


\begin{proof}  The equality in (\ref{eeq27}) follows from Lemma \ref{lem1}. The bound in (\ref{eeq27}) is well known as is the fact that Frobenius norm is square-root of the sum of square of its eigenvalues. The bound in (\ref{eeq27a}) is obvious.
\end{proof}

The Triangle Algorithm works as follows. Step 0 chooses a starting point. Step 1, given an iterate  $X' \in \mathbf{\Delta}_n$, it computes $p(X')$.  Step 2 tests if $p(X')$ is an $\varepsilon$-approximate solution, if so, it terminates. Otherwise, Step 3 computes the matrix $A$ whose least eigenvalue is to be estimated in order to test if a pivot exists. If a pivot $V$ exists, Step 4  computes the closest point to $b$ on the line segment joining $p(X')$ and $p(V)$. The corresponding step size $\alpha_*$ is used to define the next iterate replacing $X'$ and returns to Step 1.  If no pivot is found, Step 5 outputs a witness and terminates.

\begin{algorithm}[H]
		\caption{Spectrahull Triangle Algorithm ($\mathbf{S}=\{A_1, \dots, A_m\} \subset \mathbb{S}^n$,
$b \in \mathbb{R}^m$, $\varepsilon \in (0,1)$)}
		\begin{algorithmic}[1]
			\scriptsize
			\STATE	{\bf Step 0.} Set $X'=ee^T/n$, $e=(1, \dots, 1)^T$.
			\STATE 	{\bf Step 1.}  Set $p(X') = (A_1 \bullet X', \dots, A_m \bullet X')^T$.
\STATE {\bf Step 2.} If $\Vert p(X') - b \Vert \leq \varepsilon R$, output $X'$, stop.
			\STATE  {\bf Step 3.} Set $A=  \sum_{i=1}^m (A_i \bullet X' - b_i) A_i$.
\STATE{\bf Step 4.} If there exists   $V \in \mathbf{\Delta}_n$ (pivot) such that
$A \bullet V \leq \frac{1}{2} \big (\Vert p(X') \Vert^2 - \Vert b \Vert^2 \big )$;

 set $\alpha_* = {\rm argmin} \big \{ \Vert (1-\alpha) p(X') + \alpha) p(V) - b\Vert: \alpha \in [0,1] \big \}$; replace $X'$ with   $(1-\alpha_*) X' + \alpha_* V$; goto Step 1.
			\STATE 	{\bf Step 5.}  Output a witness $W$. Stop.	
		\end{algorithmic}
	\end{algorithm}
	
The iteration complexity of the Triangle Algorithm for SHM
can be stated according to Theorem \ref{thm2}. Also, Theorem \ref{thm4} is applicable if in Step 4 the algorithm computes a strict pivot.  The complexity of each iteration
is dominated by the computation of $p(X')$  and $A$ in Steps 1 and 3 in $O(mn^2)$ time,  plus that of computing a pivot or a witness in Step 4 which either defines the next iterate, returning to Step 1,  or terminates the algorithm in Step 5.  We next discuss a strategy for computing a pivot via the power method.

\subsection{A Strategy for Computing a Pivot Via the Power Method} \label{subsec6.1}
Given an iterate $X' \in \mathbf{\Delta}_n$, we wish to test if there exists a $b$-pivot at $X'$. From Theorem \ref{thmxx} it suffices to estimate the least eigenvalue of the matrix $A=  \sum_{i=1}^m (A_i \bullet X' - b_i) A_i$, see (\ref{eeq24}). This in turn gives an estimate to a corresponding eigenvector.
Conditions are well known when the power method can be used to compute the dominant absolute eigenvalue of a matrix, see e.g. \cite{Golub}. This however may be different from the least eigenvalue of $A$.  The iteration of the power method involve repeated  multiplication of $A$ by a vector. We simplify $A$ into a single matrix. Then each iteration takes $O(n^2)$.  The iterations of the power method estimating the dominant eigenvalue and a corresponding eigenvector of $A$, starting at a random  vector of unit norm $v_\circ \in \mathbb{R}^n$ are  defined as follows
\begin{equation}
\lambda_{k}= v_k^TAv_k, \quad  v_{k+1} =  \frac{A v_k}{\Vert Av_k \Vert}, \quad  k \geq 0.
\end{equation}
If eigenvalues of $A$ are all non-positive (i.e. $-A$ is PSD) the dominant absolute value is the least eigenvalue. Thus iterations of the power method would estimate the least eigenvalue. If $A$ has positive and negative eigenvalues we can shift $A$ by adding an appropriate multiple of the identity matrix.  Let us assume we have adjusted the power method  so that in each iteration it produces estimates $\lambda_k$ and $v_k$ to the least eigenvalue and eigenvector of $A$, respectively. To check if we have a pivot at hand at
$X' \in \mathbf{\Delta}_n$, using Proposition \ref{prop5} and Lemma \ref{lem1}, in each iteration of the power method we test if
\begin{equation} \label{power}
\lambda_k = v_k^TAv_k= A v_kv_k^T \leq \frac{1}{2} \big (\Vert p(X') \Vert^2 - \Vert b \Vert^2 \big).
\end{equation}
That is, in each iteration of the power method the matrix $V_k=v_kv_k^T$ is a candidate to be a pivot.  If the inequality in (\ref{power}) is satisfied we have a pivot and proceed with Step 4 of the Triangle Algorithm. In the worst case we will need to iterate until we have a sufficiently good estimate to the minimum eigenvalue of $A$. This is the case if no pivot exists in which case  a sufficiently good estimate to the minimum eigenvalue gives a witness.  However, we only need to compute a witness once.  We now discuss a strategy that attempts to ensure  the iterates of the power method will estimate the least eigenvalue of $A$.

It is easy to compute bounds on the modulus of the eigenvalues, for instance via the Gershgorin circle theorem. Let us assume we have such a bound $\sigma$. We  apply the power method to the PSD matrix $M=\sigma I - A$.  The largest eigenvalue of $M$ gives the least eigenvalue of $A$.  The advantage in applying the power method to a PSD matrix is that the following probabilistic result can be stated:

\begin{thm} \label{thm9} {\rm (\cite{Luca})} Given a PSD matrix $M \in \mathbb{S}^n$, and  $\varepsilon_\circ  > 0$, let
$u \in \{-1,1\}^n$ be randomly selected with uniform probability. Then  starting with $x_\circ=u/\Vert u \Vert $, the $k$-th iterate of the power method, a unit norm vector $x_k$, with probability at least $3/16$  computes approximation $\lambda_k$ to the largest eigenvalue $\lambda_{\rm max}(M)$, satisfying
\begin{equation}  \label{eeq28}
\lambda_k=x_k^TMx_k \geq  \lambda_{{\rm max}}(M) (1- \varepsilon_\circ) \frac{1}{1+4n (1-\varepsilon_\circ)^{2k}}.
\end{equation}
In particular, when $k = O(\ln n/\varepsilon_\circ)$,  $\lambda_k \geq \lambda_{{\rm max}}(M) (1- O(\varepsilon_\circ))$. \qed
\end{thm}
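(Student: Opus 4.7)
The plan is to combine a deterministic per-step analysis of the Rayleigh quotient produced by the power method with a Paley--Zygmund anti-concentration estimate for the random $\pm 1$ starting vector. I would begin by fixing the spectral decomposition $M = \sum_{i=1}^n \lambda_i v_i v_i^T$ with orthonormal eigenvectors $v_i$ and $\lambda_1 = \lambda_{\max}(M) \geq \lambda_2 \geq \cdots \geq \lambda_n \geq 0$. Expanding the start as $x_\circ = \sum_i \alpha_i v_i$ (so $\alpha_i = \langle x_\circ, v_i\rangle$ and $\sum_i \alpha_i^2 = 1$) and noting that $x_k$ is the normalization of $M^k x_\circ$, a routine calculation yields
\begin{equation*}
\lambda_k \;=\; x_k^T M x_k \;=\; \frac{\sum_i \alpha_i^2 \lambda_i^{2k+1}}{\sum_i \alpha_i^2 \lambda_i^{2k}}.
\end{equation*}

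Next I would establish a deterministic lower bound on $\lambda_k$. Split the indices into the ``good'' set $S = \{i : \lambda_i \geq (1-\varepsilon_\circ)\lambda_1\}$ (which contains $i=1$) and its complement. In the numerator, bound $\lambda_i \geq (1-\varepsilon_\circ)\lambda_1$ for $i \in S$ and drop the $i \in S^c$ terms; in the denominator, use $\lambda_i^{2k} \leq (1-\varepsilon_\circ)^{2k}\lambda_1^{2k}$ on $S^c$ together with $\sum_{i \in S^c}\alpha_i^2 \leq 1$, and dominate the $S$-sum from below by the single term $\alpha_1^2\lambda_1^{2k}$. Combining gives
\begin{equation*}
\lambda_k \;\geq\; (1-\varepsilon_\circ)\lambda_1 \cdot \frac{1}{1 + (1-\varepsilon_\circ)^{2k}/\alpha_1^2},
\end{equation*}
which already matches the stated inequality provided $\alpha_1^2 \geq 1/(4n)$.

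It remains to lower bound $\alpha_1 = \langle x_\circ, v_1\rangle$. Since $\Vert u \Vert = \sqrt{n}$ we have $\alpha_1^2 = \langle u, v_1\rangle^2/n$, so it suffices to show $\Pr[\langle u, v_1\rangle^2 \geq 1/4] \geq 3/16$. Setting $Y = \langle u, v_1\rangle^2$ and using $u_i^2 = 1$ with independence of the $u_i$, expansion of $Y$ and $Y^2$ gives $\mathbb{E}[Y] = \Vert v_1\Vert^2 = 1$ and $\mathbb{E}[Y^2] = 3 - 2\sum_i v_{1,i}^4 \leq 3$, the $3$ arising from the three ways of pairing indices in $\mathbb{E}[u_iu_ju_ku_l]$. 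Paley--Zygmund with threshold $\theta = 1/4$ then yields
\begin{equation*}
\Pr[Y \geq \tfrac14] \;\geq\; \bigl(\tfrac34\bigr)^2 \frac{(\mathbb{E}[Y])^2}{\mathbb{E}[Y^2]} \;\geq\; \frac{9/16}{3} \;=\; \frac{3}{16}.
\end{equation*}
The ``in particular'' claim is immediate: pick $k$ large enough that $4n(1-\varepsilon_\circ)^{2k} \leq \varepsilon_\circ$, which, using $\ln(1/(1-\varepsilon_\circ)) \geq \varepsilon_\circ$, needs only $k = O(\ln n/\varepsilon_\circ)$.

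The main obstacle I anticipate is really only the fourth-moment bookkeeping: this is the one place where the hypothesis $u \in \{-1,+1\}^n$ is used essentially, since it makes $\mathbb{E}[u_iu_ju_ku_l]$ equal to $1$ on paired-index configurations and $0$ otherwise, which in turn gives the clean bound $\mathbb{E}[Y^2] \leq 3\Vert v_1\Vert^4$. Once that moment bound is secured, the deterministic step, the threshold $1/(4n)$ for $\alpha_1^2$, and the textbook Paley--Zygmund inequality fit together algebraically to produce precisely the probability $3/16$ and the functional form stated.
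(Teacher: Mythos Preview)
Your proof is correct and complete. Note, however, that the paper does not actually prove this theorem: it is quoted verbatim from Trevisan's lecture notes \cite{Luca} and marked with a \qedsymbol\ immediately after the statement, with no argument given. What you have written is essentially the standard proof from that source --- the Rayleigh-quotient expression for $\lambda_k$, the good/bad eigenvalue split at the threshold $(1-\varepsilon_\circ)\lambda_1$, the monotonicity reduction to the single term $\alpha_1^2\lambda_1^{2k}$, and then Paley--Zygmund applied to $Y=\langle u,v_1\rangle^2$ using the Rademacher fourth-moment identity $\mathbb{E}[Y^2]=3-2\sum_i v_{1,i}^4\le 3$. The constants $1/(4n)$ and $3/16$ fall out exactly as you describe, and your derivation of the $k=O((\ln n)/\varepsilon_\circ)$ consequence is the intended one. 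There is nothing to compare against in the paper itself; you have supplied the missing argument.
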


In summary, the test for a pivot in the Triangle Algorithm is not expected  to  be computationally intensive, especially because the accuracy $\varepsilon_\circ$ in the above theorem for computing a pivot in each iteration of the power method is larger than the accuracy $\varepsilon$ in the Triangle Algorithm.  Indeed we would expect that $\varepsilon_\circ$  would be bounded away from zero over most of the iteration of TA. The following remark suggests we may actually not even need to compute a pivot via the power method too often.

\begin{remark}  \label{rem5} In SHM, as in the case of CHM, a pivot can be used over and over.  That is, while it is easy to show that a pivot used at the current iteration cannot not serve as a pivot at the next iteration, it may well be the case that at the termination of the next iteration or thereafter it can serve as a pivot again.  Next we describe a scheme for employing the generated Euclidean points in $SH(\mathbf{S})$ as much as possible. That is, when possible we treat SHM as if we are solving a CHM.  Note that by the semidefinite Carath\'eodory theorems if $b \in SH(\mathbf{S})$, only $\min \{m+1, n\}$ and even  $O(\sqrt{m})$ extreme points of $\mathbf{\Delta}_n$ are sufficient to express it.
\end{remark}

\subsection{A CHM-Based Triangle Algorithm for SHM} \label{subsec6.2}
Here we describe a strategy in the Triangle Algorithm for solving SHM which basically attempts to run as if it is solving a standard CHM with points in $\mathbb{R}^m$.  In particular, when $m=O(n)$ it would help make the algorithm faster.
Let us assume that we have implemented the Spectrahull Triangle Algorithm, having generated pivots using the power method as described previously.  We thus have generated the following three sets, $U$ a subset of points in the unit sphere in $\mathbb{R}^n$,  $\mathbf{V}$ the set of corresponding rank-one matrices and $S(\mathbf{V})$ the image of $\mathbf{V}$ under the mapping $p(X)=(A_1 \bullet X, \dots, A_m \bullet X)$. Specifically,
\begin{equation}  \label{eeq29}
U= \big \{v_1, \dots, v_t \big \} \subset \mathbb{R}^n, \quad
\mathbf{V}= \big \{V_1=v_1v_1^T, \dots, V_t=v_tv_t^T \big \} \subset \mathbb{S}^n,
\quad S(\mathbf{V})= \big \{p(V_1), \dots, p(V_t) \big\} \subset \mathbb{R}^m.
\end{equation}
Suppose we have stored $U$ and $S(\mathbf{V})$.  Assume the current iterate is $p(X')$.
We can run the Triangle Algorithm to solve the CHM that tests if $b \in conv(S(\mathbf{V}))$, the convex hull $S(\mathbf{V})$. Starting with $p(X')$, as long as we can compute a pivot in $S(\mathbf{V})$, we reduce the gap $\Vert p(X')- b \Vert$  and get closer to $b$ as if we are solving
a standard CHM.   If we compute a desired approximate solution, then we can turn it into a desired approximate solution for SHM.
This is because for any iterate  $p' \in conv(S(\mathbf{V}))$, given the coefficients in its representation as a convex combination, we also have a point $V' \in \mathbf{\Delta}_n$ as a convex combination of points in $\mathbf{V}$ with $p'=p(V')$, using the same coefficients.   However, if $b$ is not found to be in $conv(S(\mathbf{V}))$, as certified by a witness in $conv(S (\mathbf{V}))$, we proceed to find a pivot in $\mathbf{\Delta}_n$ via the power method as  described previously.  Once we generate such a pivot, say $V_{t+1}=v_{t+1}v_{t+1}^T$, we add $v_{t+1}$ to $U$ and  add $p(V_{t+1})$ to $S(\mathbf{V})$, then return to solving the CHM problem. Adding a new pivot may be sufficient to solve the SHM by continuing to iterate in the CHM. If not, it helps reduce the gap as long as we can find a pivot in $S(\mathbf{V})$.

In summary, we  work with a CHM problem whose points are being generated one by one but only return to searching for a pivot in $\mathbf{\Delta}_n$ when the search for a pivot in $S(\mathbf{V})$ has failed to produce one.  The underlying  sets $U$  and $S(\mathbf{V})$ will continue to grow, hence not only more space is needed to store these sets, the search for a pivot may also grow as a function $t$, the cardinality of these sets. Nevertheless by the  semidefinite Carath\'eodory theorems, the current iterate can be written as a convex combination of $O(\min \{m,n\})$  of the $V_i$'s. The computational complexity in the reduction of representation of a point, written as a convex combination of $t$ points, into one as convex combination of at most $O(m)$ can be shown to be $O(m^2t)$. Such reduction in representation can be implemented after every so many iterations. In any case we may state the following complexity result.

\begin{thm}  Consider the Triangle Algorithm for SHM, testing if $b \in SH(\mathbf{S})$ by searching for an $\varepsilon$-approximate solution. Let the set of pivots generated by the algorithm be $\mathbf{V}=\{V_1, \dots, V_T\}$, where $V_i=v_iv_i^T$, $\Vert v_i \Vert =1$, $i=1, \dots, T$.  Let $S(\mathbf{V})=\{p(v_1), \dots, p(v_T)\}$. Ignoring the complexity of the power method needed to compute $\mathbf{V}$, the overall complexity of the Spectrahull Triangle Algorithm is $O(mn^2/\varepsilon^2)$. Alternatively,  the overall complexity of the CHM-based Triangle Algorithm is
\begin{equation} \label{eqcomp}
O\bigg (mn^2T+ mN^2+ \frac{N}{\varepsilon^2} \bigg ),
\end{equation}
for some $N \leq T=O(1/\varepsilon^2)$, where $N$ is the cardinality of the subset of $S(\mathbf{V})$ used as CHM pivots.
\end{thm}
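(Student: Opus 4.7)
The plan is to reduce the theorem to an accounting exercise on top of the iteration-count bounds already established for the Triangle Algorithm. By Theorem \ref{thm2} applied to the General-CHM $C = SH(\mathbf{S})$, the total number of TA iterations needed to return either an $\varepsilon$-approximate solution or a witness is $O(1/\varepsilon^2)$. Each such iteration, in either version of the algorithm, consumes a new pivot, so $T = O(1/\varepsilon^2)$.

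For the Spectrahull Triangle Algorithm I would bound the per-iteration cost directly from the pseudocode. Step 1 computes $p(X') = (A_1 \bullet X', \dots, A_m \bullet X')^T$; each trace $A_i \bullet X'$ takes $O(n^2)$ time, so the whole vector costs $O(mn^2)$. Step 3 forms the matrix $A = \sum_{i=1}^m (A_i \bullet X' - b_i) A_i$, which is a weighted sum of $m$ symmetric $n \times n$ matrices and therefore also costs $O(mn^2)$. Step 4's closest-point computation via Proposition \ref{prop1} is $O(m)$, and the rank-one update of $X'$ is $O(n^2)$. Ignoring the cost of the power method in Step 4, the per-iteration cost is $O(mn^2)$; multiplying by $T = O(1/\varepsilon^2)$ outer iterations gives the first claimed bound $O(mn^2/\varepsilon^2)$.

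For the CHM-based Triangle Algorithm I would split the total work into two bookkeeping buckets. Bucket A covers the calls that invoke the power method to generate a new rank-one pivot $V_i = v_iv_i^T$; each such call requires first forming $p(X')$ and $A$ in $\mathbb{S}^n$ at cost $O(mn^2)$, and the power method itself is excluded by hypothesis. Since at most $T$ such pivots are ever produced, Bucket A contributes $O(mn^2 T)$. Bucket B covers the pure CHM iterations performed on $conv(S(\mathbf{V}))$ between pivot generations: all iterates in this phase live in the convex hull of the already-stored vectors $p(V_1), \dots, p(V_N) \in \mathbb{R}^m$, so Theorem \ref{thmCHMX} applies verbatim and charges $O(mN^2 + N/\varepsilon^2)$ for deciding the existence of an $\varepsilon$-approximate solution among them. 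Adding the two buckets gives the stated $O(mn^2 T + mN^2 + N/\varepsilon^2)$.

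The one subtlety I expect to be the main obstacle is justifying why the same $T = O(1/\varepsilon^2)$ outer bound carries through when iterations of the two regimes are interleaved. The point to emphasize is that every iteration, whether it draws a pivot from the stored set $S(\mathbf{V})$ or from a fresh power-method call, shrinks $\Vert p(X') - b \Vert$ according to the standard gap-reduction recursion underlying Theorem \ref{thm2}; so the convergence analysis is indifferent to which regime supplied the pivot. Consequently $T$ is still $O(1/\varepsilon^2)$ and $N \le T$ trivially. Once this invariance is argued, the two complexity bounds follow by combining the per-iteration counts above, and no further calculation is needed.
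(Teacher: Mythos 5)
Your proposal is correct and follows essentially the same route as the paper's (much terser) proof: the paper likewise charges $O(mn^2T)$ to the rank-one pivot generation and invokes Theorem \ref{thmCHMX} with $C = conv(S(\mathbf{V}))$ for the remaining $O(mN^2 + N/\varepsilon^2)$ term. Your per-iteration accounting for Steps 1, 3, 4 and your explicit derivation that $T = O(1/\varepsilon^2)$ are details the paper leaves implicit, and the "subtlety" you raise about interleaving the two regimes is a reasonable point not addressed in the paper at all; the one small imprecision is your sentence that \emph{every} iteration in either version "consumes a new pivot" --- in the CHM-based version the whole point is that most iterations reuse stored pivots, so only a subset of iterations generate new ones --- but since that only makes $T$ smaller, the upper bound $T = O(1/\varepsilon^2)$ and the rest of your argument stand unaffected.
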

\begin{proof} Given the definition of $T$, it takes $O(mn^2T)$ to generate $\mathbf{V}$. The remaining complexity in (\ref{eqcomp}) is according to Theorem \ref{thmCHMX}, where $C=conv(S(\mathbf{V}))$.
\end{proof}

\begin{remark} If $T=O(n)$, the overall complexity becomes $O(mn^3+ n/\varepsilon^2)$, nearly analogous to that of Triangle Algorithm CHM.
\end{remark}

If the size of the set $S(\mathbf{V})$ grows to be large we can reduce it to the subset of vertices of the convex hull of $S(\mathbf{V})$, or an approximation to it. This task can be accomplished efficiently via AVTA \cite{AKZ}.  The advantage in the CHM-based approach described is that we continue to work with a finite subset of $SH(\mathbf{S})$ in order to test if $b$ lies in $SH(\mathbf{S})$.  The overall number of iterations in this approach may remain to be within the same complexity factor as before, however each iteration will be much more efficient because on the one hand it works in dimension $m$, especially if $m=O(n)$. On the other hand, it gains from all insights accumulated with respect to CHM, e.g. the {\it Spherical-TA}, see \cite{kalsphere}, which scales the given set of points, in this case $S(\mathbf{V})$,  so that the query point $b$ is within the same distance from  the scaled points. The scaling gives a CHM equivalent to the unscaled case but gives geometric insights and measurable conditions to help make the Triangle Algorithm more efficient.  These strategies will be tested computationally in future work.

\subsection{Solving SDP Relaxation of MAX CUT via the Triangle Algorithm} \label{subsec6.3}
Here we consider an application of the Spectrahull Triangle Algorithm. Consider the MAX CUT problem for a given undirected graph $G=(V, E)$ with $|V|=n$, where each edge $(i,j)$ has a nonnegative integer weight $w_{ij}$. The problem is to find a subset $S$ of $V$ that maximizes $ \sum_{i \in S, j \in V \setminus S} w_{ij}$.
Equivalently, this can be determined by maximizing $0.5 \sum_{(i,j) \in E} w_{ij} (1 - x_ix_j)$, where $x \in \{-1, 1\}^n$. Let $W=(w_{ij})$ be the $n \times n$ matrix where $w_{ij}=0$ for all $(i,j) \not \in E$,   $w_{ij}=w_{ji}$. Letting $Y=xx^T$ and ignoring the constant multiplier, the optimization problem of interest and the SDP relaxation in  Goemans and Williamson's formulation \cite{GW}, are respectively
\begin{equation}  \label{eeq31}
\min \big \{W \bullet Y :  Y = xx^T, x \in \{-1, 1\}^n \big \}, \quad \min \big \{W \bullet Y :  E_i \bullet Y=1, i=1, \dots, n, Y \succeq 0  \big \},
\end{equation}
where  $E_i={\rm diag}(e_i)$, $e_i$ the $i$-th standard basis, thus $E_i \bullet Y= Y_{ii}$.  We can solve the SDP relaxation as a sequence of SDP feasibility problems based on binary search on the range of  optimal objective value. Let $X= Y/n$.
Since $Tr(X)=1$, from Proposition \ref {prop10} we can write $|W \bullet X |\leq \Vert W \Vert_2$. Thus the  optimal objective value of SDP relaxation lies
in the interval $[-n\Vert W \Vert_2, n\Vert W \Vert_2]$.
We see that for a given $w$ in this range, the problem of interest is an SHM, where
\begin{equation}  \label{eeq32}
b= \frac{1}{n}(w, 1, \dots, 1)^T \in \mathbb{R}^{n+1}, \quad SH(\mathbf{S})= \big \{p(X)=(W \bullet X, E_1 \bullet X, \dots, E_n \bullet X)^T:  X \in \mathbf{\Delta}_n \big \} \subset \mathbb{R}^n.
\end{equation}
From (\ref{eeq25}) and Proposition \ref{prop10} we may conclude that the corresponding $R =O(\Vert W \Vert_2)$.  Consider testing via TA if in the SHM of (\ref{eeq32}) there exists $X \in \mathbf{\Delta}_n$, satisfying  $\Vert p(X)- b \Vert \leq \varepsilon/n$. If this is solvable, $\Vert p(Y)- nb \Vert \leq \varepsilon$.
Based on Theorem \ref{thm2} the worst-case number of iterations is $O(R^2n^2/\varepsilon^2)$.  However, under some theoretical assumptions, see \cite{kalsphere}, we would expect its iteration complexity to be  $O(Rn/\varepsilon)$, when SHM is feasible, and generally much better when infeasible. This would imply in solving SHM of (\ref{eeq32}) we do not need to solve the infeasible ones to accuracy stated above as these would find a witness quickly.  These suggest the number of iterations to solve MAX CUT via the Triangle Algorithm for the underlying SHM to be within the following bounds (the logarithmic factor coming from binary search in the objective function range):
\begin{equation}  \label{eeq33}
O \bigg ( \frac{n}{\varepsilon} \Vert W \Vert_2 \ln (n\Vert W \Vert_2) \bigg), \quad  O \bigg ( \frac{n^2}{\varepsilon^2} \Vert W \Vert_2^2 \ln (n \Vert W \Vert_2) \bigg).
\end{equation}
Since $E_i$'s are simple the complexity of updating an iteration is $O(|E|)$. Assuming  the complexity of computing a pivot via the power method takes on the average between $O(|E|)$ to $O(|E| \ln n)$ (see Theorem \ref{thm9}) operations, we have to multiply the iteration bounds in (\ref{eeq33}) with this complexity. However, recall from (\ref{subsec6.2}) that TA for general SHM allows interactions with an underlying CHM.  Those iterations with pruning can be brought to within $O(n)$ complexity per iteration.  These are rough analysis of complexity of solving SDP relaxation of MAX CUT. The dependence on $R$ and hence $\Vert W \Vert_2$ may well be over-exaggerated in this analysis. Also,
given that in practice $\varepsilon$  does not need to be too small, the above complexities could make the use of TA competitive with polynomial time interior point algorithms. Many strategies are possible for solving MAX CUT as a sequence of SHM problems, e.g. in the course of binary search, going from one problem to the next one can use a witness from a previous iteration in order to solve the current problem. Solving MAX CUT via TA and these strategies are well worthy of examination and we will consider them in future work.

\section{A Triangle Algorithm for SVM Version of SHM} \label{sec7}

A more general version of SHM is {\it SHM-SVM} defined as follows.
Given two subset of $\mathbb{S}^n$ , $\mathbf{S}=\{A_1, \dots A_m\}$, $\mathbf{S'}=\{A'_1, \dots A'_{m'}\}$, consider their  spectrahulls
$C=SH(\mathbf{S})$, $C'=SH(\mathbf{S'})$. We wish to test if
$C,C'$  intersect and if so to compute an approximate intersection point.  If disjoint, we wish to compute
a separating hyperplane, or estimate the distance between them, or compute an approximate optimal pair of supporting hyperplanes.  When each of the underlying matrices are diagonal matrices, the corresponding problems  reduce to the Euclidean hard-margin {\it Support Vector Machines} (SVM), a problem in machine learning, see e.g. \cite{Burges, Vapnik}. For application of TA to SVM, as well as computational results, see  \cite{Gupta},  where it shows favorable performance in comparison  with the SMO algorithm (see \cite{Platt}).

Let $\delta_* = d(C,C')$ denote the Euclidean distance between $C, C'$, $\rho_*$  the maximum of their diameters, and $\varepsilon$ a prescribed tolerance. The Triangle Algorithm  described in \cite{kalsep} together with the SHM version described here can be used to approximate $\delta_*$, or induce a separating hyperplane, or approximate optimal supporting hyperplanes. Specifically, we can describe a version of the Triangle Algorithm to compute $(p,p') \in C \times C'$ satisfying any of the following desired conditions, when applicable:

\noindent {(1)} $d(p,p') \leq \varepsilon d(p,v)$, $v \in C$, or $d(p,p') \leq \varepsilon d(p',v')$, $v' \in C'$ (when $\delta_*=0$);

\noindent {(2)} the orthogonal bisector of $pp'$ separates $C$ from $C'$;

\noindent {(3)}  $d(p,p') - \delta_* \leq \varepsilon d(p,p')$ (when $\delta_* >0$);

\noindent {(4)} a pair of supporting hyperplanes $(H,H')$ to $C,C'$ orthogonal to $pp'$ satisfy $\delta_* -d(H,H') \leq \varepsilon d(p,p')$.

From the results in \cite{kalsep} and those stated in this article we  conclude the following iteration complexity bounds for solving the above four problems. In particular, when $C'= \{b\}$, a single point, tasks (1) and (2) are described in this article.  However, we can also estimate the distance from $b$ to $C$ to  prescribed accuracy. The algorithm for that is analogous with that described in \cite{kalsep}, however it has to be combined with  the results developed in this article.  We avoid such details.  The following complexity bound can be deduced.

\begin{thm} \label{thm10}
The corresponding number of iterations to solve tasks (1)-(4) are respectively, $O(1/\varepsilon^2)$, $O(\rho_*^2/\delta_*^2)$, and $O(\rho_*^2/\delta_*^2 \varepsilon)$ for the last two. The complexity in each iteration of the first two tasks is computing for a given pair of iterates $(p,p') \in C \times C'$ a {\it pivot},  i.e. $v \in C$ with $d(p,v) \geq d(p',v)$, or $v' \in C'$ with $d(p',v') \geq d(p,v')$. For the last two tasks the complexity of each iteration is either computing a pivot, or a pair of supporting  hyperplanes $(H,H')$  orthogonal to $pp'$.  As in SHM, in the worst-case the complexity of each iteration is running the power method to estimate a least eigenvalue of a symmetric matrix and a corresponding eigenvector. \qed
\end{thm}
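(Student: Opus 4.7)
\textbf{Proof plan for Theorem \ref{thm10}.} The strategy is to assemble the result from two ingredients already developed in the paper: the two-set Triangle Algorithm framework for General-CHM in \cite{kalsep}, which gives the iteration counts, and the pivot characterization for a spectrahull (Proposition \ref{prop5}, Lemma \ref{lem1}, Theorem \ref{thmxx}), which gives the per-iteration work. The plan is to describe an algorithm that alternates between the two sets $C=SH(\mathbf{S})$ and $C'=SH(\mathbf{S'})$, maintaining iterates $p=p(X) \in C$ and $p'=p(X') \in C'$, and at each step either: (i) finds a pivot in one of the two spectrahulls that reduces $\Vert p-p'\Vert$, or (ii) certifies a witness/supporting hyperplane configuration. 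The existence of a pivot is guaranteed by the distance duality (an appropriate two-set analogue of Theorem \ref{thm1}), and the replacement step proceeds via the nearest-point formula of Proposition \ref{prop1}, applied alternately on the segments $p v$ and $p' v'$.

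For the iteration counts I would directly instantiate the complexity results of \cite{kalsep} for General-CHM with the pair of compact convex sets $(C,C')$. Task (1) is the analogue of computing an $\varepsilon$-approximate solution when the sets meet: applying Theorem \ref{thm2} to the pair yields the $O(1/\varepsilon^2)$ bound. Task (2) is the analogue of finding a witness: again by Theorem \ref{thm2} the number of iterations is $O(R^2/\delta_*^2)$, and since the effective $R$ here is controlled by the maximum of the two diameters $\rho_*$, this becomes $O(\rho_*^2/\delta_*^2)$. Tasks (3) and (4) refine the witness into an $\varepsilon$-accurate distance estimate or a pair of approximate supporting hyperplanes; applying Theorem \ref{thm6} on the pair of sets contributes the extra $1/\varepsilon$ factor, giving $O(\rho_*^2/\delta_*^2 \varepsilon)$. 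The key observation making this clean is that all these bounds depend only on convex-geometric quantities ($R$, $\rho_*$, $\delta_*$) and not on the internal description of the sets, so replacing the polytopes of \cite{kalsep} by spectrahulls does not change the counts.

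For the per-iteration work I would use the pivot characterization developed in Section \ref{sec5}. Given current iterates $(p,p')=(p(X),p(X'))$, testing whether a pivot $v \in C$ exists reduces, exactly as in (\ref{eeq22})--(\ref{eeq25}), to minimizing a linear functional $A\bullet Y$ over $\mathbf{\Delta}_n$ for an appropriate $A \in \mathbb{S}^n$ built from $p,p'$ and the matrices $A_i$, and by Lemma \ref{lem1} this minimum equals $\lambda_{\min}(A)$, attained at a rank-one $V=uu^T$ for $u$ a corresponding unit eigenvector. The same argument on the $C'$ side produces a candidate pivot $v'=u'u'^T$. Thus each iteration costs $O((m+m')n^2)$ for assembling $A$ and $A'$, plus the cost of estimating the least eigenvalue of a symmetric matrix, which by Theorem \ref{thm9} is handled by the power method (after a Gershgorin shift to a PSD matrix). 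For tasks (3) and (4) the iteration also requires checking a pair of candidate supporting hyperplanes orthogonal to $p p'$, but the check again reduces to the sign of a least-eigenvalue estimate of a symmetric matrix of the same form.

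The main obstacle I would expect is not the iteration counts themselves, which are inherited from \cite{kalsep}, but adapting the supporting-hyperplane step in tasks (3) and (4) to the spectrahull setting: one has to verify that the analogue of the distance-refinement argument for General-CHM goes through when the extreme points of $C$ and $C'$ are only implicitly available through the rank-one matrices produced by the power method, and that the suboptimal eigenvalue estimates coming from Theorem \ref{thm9} do not degrade the $\rho_*^2/\delta_*^2\varepsilon$ bound. The resolution is to observe, as in Theorem \ref{thmxx}, that a rank-one pivot always suffices, so the two-set algorithm of \cite{kalsep} can be run verbatim on rank-one approximations, with the power-method accuracy $\varepsilon_\circ$ chosen coarser than $\varepsilon$ so that the eigenvalue estimates certify the pivot/witness inequalities strictly; the remaining details are then routine combinations of the arguments already used for the one-set case and for the separation version in \cite{kalsep}.
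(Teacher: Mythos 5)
Your proposal matches the paper's own (extremely terse) justification: the paper simply cites the two-set Triangle Algorithm complexity bounds from \cite{kalsep} together with the pivot characterization (Proposition \ref{prop5}, Lemma \ref{lem1}, Theorem \ref{thmxx}) and the power-method estimate (Theorem \ref{thm9}) developed earlier in the article, and states the theorem with no further details. Your fleshed-out account — inheriting the iteration counts from \cite{kalsep} since they depend only on $R$, $\rho_*$, $\delta_*$, and handling each iteration by a least-eigenvalue/power-method pivot computation on each side — is exactly the argument the paper has in mind.
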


\section*{Concluding Remarks and Future Work}
Given a subset $\mathbf{S}=\{A_1, \dots, A_m \}$ of  $n \times n$ real symmetric matrices, $\mathbb{S}^n$, we defined its {\it spectrahull}, denoted by $SH(\mathbf{S})$, to be
an analogue of the convex hull of a finite set of points in a Euclidean space.
Then given a point $b \in \mathbb{R}^m$,
we defined {\it spectrahull membership} (SHM) to be the problem of testing if $b \in SH(\mathbf{S})$, a semidefinite version of the {\it convex hull membership} (CHM), defined over the spectraplex. In fact when $A_i$'s are diagonal matrices the spectraplex can be replaced with the unit simplex and  the spectrahull and SHM reduce to the ordinary convex hull and CHM, respectively.

An SDP feasibility problem having no recession direction is reducible to a homogeneous SHM. Hence any algorithm for SHM is applicable to such SDP feasibility problem and more generally to any SDP optimization over a bounded feasible set.  By proving  $SH(\mathbf{S})$ is compact and convex, we reduced SHM to a special case of
{\it General-CHM}, the  problem of testing if a point $b \in \mathbb{R}^m$ belongs to a given compact convex subset $C$ of $\mathbb{R}^m$. Next we developed  a version of the {\it Triangle Algorithm} in \cite{kalsep} in order to solve SHM. This required proving a {\it distance duality} for SHM, also showing that the task of computing a pivot is equivalent to the estimation of an eigenvector corresponding to the least eigenvalue of a symmetric matrix arising in each iteration. Testing for a pivot  can thus be carried out via a modified power method, not typically needed to be executed to high precision. The reduction of the search for a pivot to an eigenvalue estimation was justified by proving that if a pivot exists at an iterate, a rank-one pivot must necessarily  exist.  Since the power method is the main computational tool of the algorithm and easy to implement, we would expect each iteration of TA for SHM would be  easy and flexible to implement. This is analogous to its CHM counterpart.
Not only that, as in the case of CHM, we would expect that some of the computed  pivots  would be utilized over and over at later stages. This suggests other strategies in solving SHM via the Triangle Algorithm. For example,  storing the base unit vectors, $U$ that define the pivots and test if $b$ lies in the convex hull of the corresponding images $S(\mathbf{V})$ (see (\ref{eeq29})), a subset of $SH(\mathbf{S})$, as if  solving a CHM.  When no pivot would be available, as certified by a witness for the corresponding CHM, then and only then we would generate a new pivot using the power method. This makes solving SHM to be treated  as close as possible to solving an ordinary CHM. We also stated a potentially more efficient complexity for this CHM-based algorithm for SHM. Thus TA for SHM can allow substantial interplay with an inherent CHM.   Additionally,  we can improve on storage space by storing only the subset of $U$ whose corresponding points in $S(\mathbf{V})$ are the vertices of $conv(S(\mathbf{V}))$. This task can be accomplished using {\it AVTA} \cite{AKZ}, an  algorithm for finding all vertices, or a good subset of vertices of  points in the Euclidean space. This strategy as well as working with the underlying CHM are also supported by the semidefinite versions of Carath\'edoroy theorem stated in the article.

In summary, the proposed Triangle Algorithm for SHM developed here not only offers theoretical insight on SDP  itself but  it will serve as an alternative to the existing algorithms for solving bounded SDP-Feasibility, as well as general SDP optimization problems. In particular, we showed how the SDP relaxation of MAX CUT can be solved via the TA for SHM.  We will carry out computational experimentation with TA for solving SDP relaxation of this and other combinatorial problems and will report on the results.  We also described a semidefinite version of SVM, where it can be solved as the Euclidean version of SVM via the  Triangle Algorithm described in \cite{kalsep} but using the machinery developed here for SHM. Regardless of the potential practical utility of SHM-SVM, the ability to solve it via the Triangle Algorithm is indicative of the power of the algorithm. In fact the definition of spectrahull and SHM can be extended to a more general case, where $\mathbb{S}^n$ and the cone of positive semidefinite matrices are replaced with a general Hilbert space and
an appropriate pointed cone, respectively.  Thus one can define a corresponding  Triangle Algorithm and extend the theoretical iteration complexity bounds.  Returning to SHM,  it is also possible to define more general problems.  For example, given $\mathbf{S}=\{A_1, \dots, A_m \}$ as before,  consider the set  $\{X \in \mathbf{\Delta}_n:  q_i(A_i \bullet X), i=1, \dots, m\}$, where each $q_i(x)$ is a polynomial in $x$, say a quadratic polynomial.
Consider for example a generalization of quadratically constrained quadratic programming.  We may wish to test if a given $b \in \mathbb{R}^m$ lies in the convex hull of this set.  We can still employ the TA but of course the computation of a pivot would become more complicated. In future work we will carry out computational results with TA on some SDP feasibility problems, solving them as SHM. As with the convex hull of a finite point set, we anticipate that consideration of  spectrahull of a finite subset of symmetric matrices will give rise to new problems, e.g. the notion of irredundancy and approximation of a spectrahull via a subset of its extreme points. Finally, in a forthcoming article we will offer yet another version of the Triangle Algorithm for solving the standard LP and SDP feasibility problems.

\bigskip



\begin{thebibliography}{90}

\bibitem{Alizadeh} F. Alizadeh,  Interior point methods in
semidefinite programming with applications to combinatorial
optimization,  SIAM J. Optim., 5 (1995), 13 - 51. \filbreak

\bibitem{AKZ} P. Awasthi, B. Kalantari, Y. Zhang,
Robust vertex enumeration for convex hulls in high dimensions, Proceedings of AIStats 2018, 1387 -1396. Full article arxiv.org/abs/1802.01515v2 \filbreak



\bibitem{Barvinok} A. Barvinok, Problems of distance geometry and convex properties of quadratic maps, {\it Discrete \& Computational Geometry}, 13 (1995), 189 - 202. \filbreak

\bibitem{Parrilo}  G. Blekherman, P. A. Parrilo,  R. Thomas (Editors), {\it Semidefinite Optimization and Convex Algebraic Geometry},  MPS-SIAM Series on Optimization, 2012. \filbreak

\bibitem{Burges} C. Burges, A tutorial on support vector machines for
pattern recognition,  {\it Data Mining and Knowledge Discovery}, 2 (1998), 121 - 167. \filbreak


\bibitem{clark2008} K. L. Clarkson, Coresets, sparse greedy approximation, and the Frank-Wolfe algorithm. In SODA '08: Proceedings of the nineteenth annual ACM-SIAM symposium on Discrete algorithms, 922 - 931. Society for Industrial and Applied Mathematics, 2008. \filbreak

\bibitem{Frank} M. Frank and P. Wolfe, An algorithm for quadratic programming, {\it Naval Res. Logist. Quart.},  3 (1956), 95 - 110. \filbreak

\bibitem{Gartner} B, G{\"a}rtner and M. Jaggi, Coresets for polytope distance,
Symposium on Computational Geometry, (2009), 33 - 42.\filbreak

\bibitem{Garey} M. Garey and D. Johnson, {\it Computers and Intractability: A Guide to the Theory of NP-Completeness}, W. H. Freeman, 1979. \filbreak

\bibitem{GW} M. Goemans and D. Williamson, Improved approximation algorithms for maximum cut and satisfiability problems using semidefinite programming,
JACM, 42 (1995), 1115 - 1145.\filbreak

\bibitem{Golub} G. H. Golub and C. F. Van Loan, {\it Matrix Computations}, Johns Hopkins University Press, 3rd Edition, 1996. \filbreak



\bibitem{Gilbert} E. G. Gilbert, An iterative procedure for computing
the minimum of a quadratic form on a convex set, {\it
SIAM Journal on Control}, 4 (1966), 61 - 80. \filbreak

\bibitem{Gupta} M. Gupta and B. Kalantari,  A comparison of the triangle algorithm and SMO for solving the hard margin problem, 2018 https://arxiv.org/pdf/1611.01856.pdf \filbreak

\bibitem{HRVW} C. Helmberg, F. Rendl, R. Vanderbei, and H. Wolkowicz, An interior point method for semidefinite programming, SIAM J. Optim., 6 (1996), 342 - 361. \filbreak

\bibitem{Jaggi}  M. Jaggi, Revisiting Frank-Wolfe: Projection-free sparse convex optimization,  Proceedings of the 30th International Conference on Machine Learning, (2013), 427 - 435.\filbreak

\bibitem{jinkal} Y. Jin and B. Kalantari,  A procedure of Chv\'atal for testing feasibility in linear programming and matrix scaling,  {\it Linear Algebra and its Applications}, 416 (2006), 795 - 798. \filbreak


\bibitem{kalcan} B. Kalantari, Canonical problems for quadratic programming and projective methods for their solution, {\it Contemporary Mathematics}, 114 (1990), 243 - 263. \filbreak

\bibitem{kalsdp} B. Kalantari, Semidefinite programming and matrix scaling over the semidefinite cone, Linear Algebra and its Applications, 375 (2003), 221 - 243.


\bibitem{kalchar} B. Kalantari, A characterization theorem and an algorithm for a convex hull problem, {\it Annals of Operations Research}, 226 (2015), 301 - 349. \filbreak

\bibitem{kalsep} B. Kalantari,  An algorithmic separating hyperplane theorem and its applications, {\it Discrete Applied Mathematics},  256 (2019), 59 - 82.\filbreak

\bibitem{kalsphere} B. Kalantari and Y. Zhang,
Spherical Triangle Algorithm: A fast oracle for convex hull
membership queries, 2019, https://arxiv.org/pdf/1810.07346.pdf \filbreak



\bibitem{kar84} N. Karmarkar,  A new polynomial time algorithm for linear programming, {\it Combinatorica}, 4 (1984),  373 - 395. \filbreak

\bibitem{kha79} L. G. Khachiyan, A polynomial algorithm in linear programming, {\it Doklady Akademia Nauk SSSR},
(1979), 1093 - 1096.\filbreak

\bibitem{KK} L. Khachiyan and B. Kalantari, Diagonal matrix scaling and linear programming, {\it SIAM Journal on Optimization}, 4 (1992), 668 - 672. \filbreak

\bibitem{Lemon} A. Lemon, A. Man-Cho So, Y. Ye,   Low-rank semidefinite programming: theory and applications, {\it Foundations and Trends in Optimization}, 2 (2015), 1 - 156. \filbreak


\bibitem{Lovasz} L. Lov\'asz, Semidefinite Programs and Combinatorial Optimization. In: Reed B.A., Sales C.L. (eds) Recent Advances in Algorithms and Combinatorics. CMS Books in Mathematics. Springer, New York, NY, 2003. \filbreak

\bibitem{Luo} Z.Q. Luo, W.K. Ma, A. Man-Cho So, Y. Ye, S, Zhang, Semidefinite relaxation of quadratic optimization problems, Special Issue On Convex Opt. for SP, May 2010.

\bibitem{Nesterov} Y. Nesterov, {\it Introductory Lectures On Convex Optimization}, Applied Optimization, Springer Science+Business Media, New York, 2004. \filbreak

    \bibitem{NN} Y. Nesterov and A.S. Nemirovskii, {\it Interior-Point
Polynomial Algorithms in  Convex Programming}, SIAM, Philadelphia,
PA, 1994.\filbreak

\bibitem{NW} Y. Nesterov, H. Wolkowicks, Y. Ye, Semidefinite programming relaxations of nonconvex quadratic, in Handbook of Semidefinite Programming, H. Wolkowicz, R. Saigal, L. Vandenberghe (Eds.), 2000.



\bibitem{Platt} J. C. Platt, Sequential minimal optimization: a fast algorithm for training support vector machine, Microsoft Research,  Technical Support MSR-TR-98-14, no. 1, 1998. \filbreak

\bibitem{Ramana} M. Ramana and  A. J. Goldman,  Some geometric results in semidefinite programming, {\it Journal of Global Optimization}, 7 (1995), 33 - 50. \filbreak

\bibitem{Luca} Luca Trevisan, Class note handout, Stanford University, 2013
http://theory.stanford.edu/~trevisan/expander-online/lecture03.pdf \filbreak


\bibitem{van96} L. Vandenberghe and S. Boyd,  Semidefinite programming,
{\it SIAM Review}, 38 (1996), 49 - 95.\filbreak

\bibitem{Vapnik} V. Vapnik, {\it Estimation of Dependencies Based on Empirical Data}, Springer-Verlag, 2006. \filbreak


\end{thebibliography}
\end{document}